\newtheorem{theorem}{Theorem}[section]
\newtheorem{corollary}[theorem]{Corollary}
\newtheorem{definition}[theorem]{Definition}
\newtheorem{example}[theorem]{Example}
\newtheorem{lemma}[theorem]{Lemma}
\newtheorem{proposition}[theorem]{Proposition}
\newtheorem{remark}[theorem]{Remark}
\begin{document}

%

\title{Additive and multiplicative properties of Drazin inverse under a new weakly commutativity condition}

\author[affil1]{Rounak Biswas\corref{Rounak Biswas}}
\ead{xyzrounak3@gmail.com}
\author[affil1]{Falguni Roy}
\ead{royfalguni@nitk.edu.in}
\address{National Institute of Technology Karnataka, India}

\newcommand{\AuthorNames}{Rounak Biswas, Falguni Roy}

\newcommand{\FilMSC}{15A09; 47A05; 47L10 }
\newcommand{\FilKeywords}{Drazin inverse; g-Drazin inverse; weakly commutativity; additive properties; multiplicative properties.}
\newcommand{\FilCommunicated}{name of the Editor, mandatory}
\cortext[mycorrespondingauthor]{* Corresponding author}

\begin{abstract}
Given a complex Banach space $X$, let $\mathcal{B}(X)$ be the collection of all bounded linear operators on $X.$ For $A,B\in\mathcal{B}(X)$ we define $A,B$ are $A$-weakly commutative if there exists $C\in\mathcal{B}(X)$ satisfying $$AB=CA\text{ and }BA=AC.$$ The objective of this paper is to study the Drazin invertibility of $A+B$ and  $AB$, when $A, B\in\mathcal{B}(X)^D$ are $A\text{ and }B$-weakly commutative.  Consequently, this extends some of the additive and multiplicative results established by Huanyin and Marjan Sheibani (Linear and Multilinear Algebra \textbf{70.1} (2022): 53-65) for a distinct family of elements.  Moreover, we also establish these additive and multiplicative properties for g-Drazin inverses in a complex Banach algebra.
\end{abstract}

\maketitle

\section{Introduction}
Throughout this paper we consider $X$ as a complex Banach space and $\mathcal{B}(X)$ represents the collection of all bounded linear operators on $X.$
 For $A\in\mathcal{B}(X),$ the notations $\mathcal{N}(A),\text{ }\mathcal{R}(A)$ and $\sigma (A)$ denotes the null space, range space, and spectrum of the operator $A$, respectively. The set of all invertible operators in $\mathcal{B}(X)$ will be denoted by $\mathcal{B}(X)^{inv}$, and $I$ represents the identity operator on $X$.  An operator $A\in \mathcal{B}(X)$ is Drazin invertible if there exists an operator $A^D\in \mathcal{B}(X)$ such that
\begin{equation}\label{def_Drazin}
    AA^D=A^DA,\text{ }A^DAA^D=A^D\text{ and }A(I-AA^D)\in \mathcal{B}(X)^{nil},
    \end{equation} where $\mathcal{B}(X)^{nil}$ represent the collection of all nilpotent operators in $\mathcal{B}(X)$. The operator $A^D$ satisfying (\ref{def_Drazin}) is the Drazin inverse of $A$, and the least non-negative integer $k$ satisfying $\left(A(I-AA^D)\right)^k=0$ is the index of $A,$ denoted by $i(A).$ $\mathcal{B}(X)^D$ stands for the collection of Drazin invertible operators in $\mathcal{B}(X)$ and for $A\in\mathcal{B}(X)^D,$ $A^{\pi}$ denote the spectral idemoptent $I-AA^D.$ If $A\in \mathcal{B}(X)$ is a Drazin invertible operator with index $k$, then the Banach space $X$ has the decomposition \cite[Theorem 12.1.2]{wang2018generalized}
    \begin{equation}\label{decom}
        X=\mathcal{R}(A^k)\oplus\mathcal{N}(A^k)=\mathcal{R}(AA^D)\oplus \mathcal{N}(AA^D),
    \end{equation}
     and with respect to this decomposition of $X,$ $A$ have the block form\cite{deng2008drazin} $$A=A_1\oplus A_2,$$ where $A_1$ is invertible operator on $\mathcal{R}(AA^D)$ and $A_2$ is nilpotent operator on $\mathcal{N}(AA^D)$. A generalization of the Drazin inverse was introduced by Koliha \cite{koliha1996generalized}, known as the generalized Drazin inverse(g-Drazin inverse). From now onwards $\mathcal{A}$ will represent an unital complex Banach algebra. An element $a\in\mathcal{A}$, is said to be g-Drazin invertible if there exists an element $a^d\in\mathcal{A}$ such that $$aa^d=a^da,\text{ }a^daa^d=a^d\text{ and }a-a^2a^d\in\mathcal{A}^{qnil},$$ where $\mathcal{A}^{qnil}$ represent the collection of all quasinilpotent elements in $\mathcal{A}$. Here $a^d$ is the g-Drazin inverse of $a$, and the collection of all g-Drazin invertible elements in $\mathcal{A}$ will be denoted by $\mathcal{A}^d.$
     A fascinating problem in Drazin inverse theory revolves around examining the Drazin invertibility for the sum and product of two Drazin invertible elements. The investigation of the Drazin inverse's additive and multiplicative properties is significantly influenced by the commutative relationship between two elements. In the literature, several results pertaining to this aspect can be found in \cite{deng2008drazin,deng2010new,wei2011note}. Instead of taking commutativity, if we consider the weaker relation $a^2b=ba^2$ for $a,b\in\mathcal{A}^D(\text{or }\mathcal{A}^d)$, then, in general, without any additional condition on $a,b$, it becomes challenging to deduce any conclusions about the Drazin(or g-Drazin) invertibility of $ab$ and $a+b$. Chen and Sheibani \cite{chen2022g} provided several equivalent conditions for the g-Drazin invertibility of $a+b$, assuming that $\lambda a^2b=\lambda^{\prime}ba^2=aba$ and $\mu ab^2=\mu^{\prime}b^2a=bab,$ where $\lambda, \lambda^{\prime},\mu,\mu^{\prime}$ are non zero complex numbers. Recently, Qin and Lu \cite{qin2023drazin} proved these results under weaker conditions $a^2b=aba$ and $b^2a=bab$. In this context, motivated by weakly commutativity relation\cite{pondvelivcek1975weakly}, we introduce a new relation between two elements of a Banach algebra. This relation is defined to generate a distinct family of elements compared to the one used by Chen and Sheibani\cite{chen2022g}.
     \begin{definition}\label{defn_weak} 
    Two elements $a,b\in\mathcal{A}$ are said to be $a$-weakly commutative if there exists $c\in\mathcal{A}$ satisfying $ab=ca\text{ and }ba=ac.$ 
\end{definition}
\noindent Note that commutativity is a particular case of Definition \ref{defn_weak}, when $c=b$. 
\begin{remark}
  According to Definition \ref{defn_weak}, if $a,b$ are $a$-weakly commutative, then $a,c$ are also $a$-weakly commutative.
\end{remark}
\begin{definition}
    For $a,b\in\mathcal{A},$ if $a,b$ are $a$-weakly commutative as well as $b$-weakly commutative also then we say $a,b$ are $\{a,b\}$-weakly commutative.
\end{definition}

\begin{lemma}\label{2commutative}
    If two elements $a,b$ in a Banach algebra $\mathcal{A}$ are $a$-weakly commutative then $a^2b=ba^2.$
    \begin{proof}
        Since $a,b$ are $a$-weakly commutative therefore there exists an element $c\in\mathcal{A}$, such that $ab=ca\text{ and }ba=ac$. Hence $$a^2b=aca=ba^2.$$ 
    \end{proof}
\end{lemma}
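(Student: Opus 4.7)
The plan is to apply the definition of $a$-weakly commutativity directly and chain the two given identities in a single associative rewrite. By hypothesis, there exists $c\in\mathcal{A}$ with $ab=ca$ and $ba=ac$, so the target identity $a^2b=ba^2$ should follow by inserting the intermediate element $c$ between the two $a$'s.

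Concretely, I would start from $a^2b$, regroup it as $a(ab)$, and then use the first relation $ab=ca$ to rewrite this as $a(ca)=(ac)a$ by associativity. At that point the second relation $ac=ba$ converts the expression into $(ba)a=ba^2$, which is exactly the claim. So the entire argument amounts to the short chain
\begin{equation*}
a^2b \;=\; a(ab) \;=\; a(ca) \;=\; (ac)a \;=\; (ba)a \;=\; ba^2.
\end{equation*}

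There is no real obstacle here; the only thing to be careful about is using both defining relations exactly once and in the right order, and relying on associativity of the Banach algebra product. This lemma will serve as the basic bridge showing that the new weakly commutativity notion from Definition \ref{defn_weak} implies the classical weaker commutation $a^2b=ba^2$ studied by Qin and Lu, which I would then exploit in the subsequent additive and multiplicative Drazin invertibility results.
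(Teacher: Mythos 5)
Your proof is correct and is exactly the paper's argument: both rewrite $a^2b=a(ab)=a(ca)=(ac)a=(ba)a=ba^2$ using the two defining relations once each, with the paper simply abbreviating the middle as $aca$. No differences worth noting.
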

\begin{lemma}
    If $a,b\in\mathcal{A}$ are $a$-weakly commutative and $a$ is idempotent, then $a,b$ commutes with each other.
    \begin{proof}
        Since $a$ is idempotent, therefore, by Lemma \ref{2commutative}, we obtain $ab=ba$ as desired.
    \end{proof}
\end{lemma}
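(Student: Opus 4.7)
The plan is to reduce this to the preceding Lemma \ref{2commutative}, which already delivers the key algebraic identity $a^{2}b=ba^{2}$ for any $a,b$ that are $a$-weakly commutative. Since idempotency of $a$ collapses $a^{2}$ to $a$, the conclusion should then drop out with essentially no further work.

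More explicitly, I would first invoke Lemma \ref{2commutative} to obtain $a^{2}b=ba^{2}$. Then, using the hypothesis $a^{2}=a$, I would substitute on both sides to get $ab=a^{2}b=ba^{2}=ba$. This finishes the argument in one line of manipulation.

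If one prefers to avoid citing the previous lemma, the same conclusion can be reached directly from the definition: since $a,b$ are $a$-weakly commutative, pick $c\in\mathcal{A}$ with $ab=ca$ and $ba=ac$; then the chain
\begin{equation*}
ab \;=\; a^{2}b \;=\; a(ab) \;=\; a(ca) \;=\; (ac)a \;=\; (ba)a \;=\; ba^{2} \;=\; ba
\end{equation*}
uses only $a^{2}=a$ together with the two defining relations. Either route works and neither presents a real obstacle; the only point worth being careful about is keeping the two weakly commutative identities ($ab=ca$ and $ba=ac$) straight when inserting the intermediate element $c$. Given the simplicity, the cleanest presentation is to cite Lemma \ref{2commutative} and apply the idempotent hypothesis in a single step.
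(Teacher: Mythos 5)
Your proposal is correct and follows essentially the same route as the paper: cite Lemma \ref{2commutative} to get $a^{2}b=ba^{2}$ and then apply $a^{2}=a$. The additional direct chain from the definition is a valid unwinding of the same argument and adds nothing that needs checking.
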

\begin{remark}

  If $a\in\mathcal{A}$ then by Lemma \ref{2commutative}, we have\\ $$\displaystyle\{b:b\in\mathcal{A}\text{, }a,b \text{ are }a\text{-weakly commutative}\}\subset\{b:b\in\mathcal{A}\text{ and }a^2b=ba^2\},$$ but for any non zero complex numbers $\lambda,\lambda^{\prime}$ \begin{multline*}
      \{b:b\in\mathcal{A}\text{, }a,b \text{ are }a\text{-weakly commutative}\}\not\subset \{b:b\in\mathcal{A}\text{ and }\lambda a^2b=\lambda^{\prime}ba^2=aba\}. \end{multline*} This becomes evident in the subsequent example.
\end{remark}
\begin{example}\label{example}
    Let $a=\begin{bmatrix}
        0 & 1\\
        1 & 0 \\
    \end{bmatrix}$ and $b=\begin{bmatrix}
        0 & -1 \\
        0 & 0 \\
    \end{bmatrix}\in\mathcal{A}=M_2(\mathbb{C}).$ Then $a,b$ are $a$-weakly commutative but $a^2b=ba^2\neq aba.$
    \begin{proof}
        Let $c=\begin{bmatrix}
            0 & 0\\
            -1 & 0\\
        \end{bmatrix}$ then $ab=ca\text{ and }ba=ac$ therefore $a,b$ are $a$-weakly commutative. Moreover $a^2b=ba^2=\begin{bmatrix}
            0 & -1\\
            0 & 0\\
        \end{bmatrix}$ but $aba=\begin{bmatrix}
            0 & 0\\
            -1 & 0\\
        \end{bmatrix}.$
    \end{proof}
\end{example}
\begin{remark}
    For $a,b\in\mathcal{A}$, $a$-weakly commutativity of $a,b$ does not implies $b$-weakly commutativity of $a,b$. In Example \ref{example} $a,b$ are $a$-weakly commutative, but they are not $b$-weakly commutative.
\end{remark}
\noindent The aim of this paper is to provide several additive and multiplicative results for Drazin and g-Drazin inverse under this newly defined relation.

The remainder of this paper is presented in the following manner. In Section \ref{addition}, we establish a relation between the Drazin invertibility of $A+B$, $AA^D(A+B)BB^D$, $AA^D(A+B)$, $(A+B)BB^D$ and $A^2A^D(I+A^DB)$ when $A,B\in\mathcal{B}(X)^D$ are $\{A,B\}$-weakly commutative. Furthermore, we provide a representation for $(A+B)^D$ in all the cases mentioned above. In Section \ref{multiplication}, for $A,B\in\mathcal{B}(X)^D$ we investigate the Drazin invertibility of $AB$ when $A,B$ are $\{A,B\}$-weakly commutative. As an application, we establish some new additive and multiplicative results in Section \ref{application}, for some particular types of operators, using the results found in section \ref{addition} and section \ref{multiplication}.  Finally,
in Section \ref{Banach alg}, we extend some of the results presented in section \ref{addition} and section \ref{multiplication} for g-Drazin invertibility within a complex Banach algebra.
\section{Additive properties}\label{addition}
The aim of this section is to establish several results related to the addition of two Drazin invertible operators.
We begin with the following lemma.
\begin{lemma}\label{oldnil}
    If $A,B\in\mathcal{B}(X)^{nil}$, such as $AB=0$, then $A+B$ is nilpotent.
\end{lemma}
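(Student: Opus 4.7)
The plan is to exploit the annihilation $AB=0$ to collapse the noncommutative binomial expansion of $(A+B)^k$ into a one-parameter sum, then use the two nilpotency indices to force the sum to vanish for large enough $k$.

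First I would fix integers $m,n \geq 1$ with $A^m = 0$ and $B^n = 0$. Next I would expand $(A+B)^k$ as a sum of all $2^k$ words of length $k$ in the letters $A,B$. The key observation is that $AB=0$ kills every word that contains the factor $AB$ somewhere, i.e.\ every word in which a $B$ is followed (not necessarily immediately) by\dots no, more carefully: any word in which the letter $A$ appears immediately before the letter $B$ is zero, because we can insert parentheses to isolate an $AB$. But more is true: among the surviving words, the ones that are not obviously zero are precisely those in which no $A$ precedes any $B$, i.e.\ words of the form $B^{i}A^{j}$ with $i+j=k$. (Any other word contains somewhere an $A$ immediately followed by a $B$, and repeated use of associativity and $AB=0$ kills it.) Hence
\begin{equation*}
(A+B)^{k} \;=\; \sum_{\substack{i,j\geq 0\\ i+j=k}} B^{i}A^{j}.
\end{equation*}

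Then I would take $k = m+n-1$. For any pair $(i,j)$ with $i+j = m+n-1$, if both $i \leq n-1$ and $j \leq m-1$ held, we would get $i+j \leq m+n-2$, a contradiction; so either $i \geq n$, forcing $B^{i}=0$, or $j \geq m$, forcing $A^{j}=0$. Every term of the sum therefore vanishes, giving $(A+B)^{m+n-1}=0$, so $A+B \in \mathcal{B}(X)^{nil}$.

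The only mild subtlety is justifying rigorously that the noncommutative expansion of $(A+B)^{k}$ collapses to $\sum_{i+j=k} B^{i}A^{j}$ when $AB=0$; this is easiest to see by induction on $k$, writing $(A+B)^{k+1} = (A+B)(A+B)^{k} = A\sum_{i+j=k}B^{i}A^{j} + B\sum_{i+j=k}B^{i}A^{j}$ and noting that $AB^{i}A^{j}=0$ for $i\geq 1$ while $A\cdot A^{j}=A^{j+1}=B^{0}A^{j+1}$, so the first sum contributes only the $i=0$ term and the second sum shifts the $B$-exponent up by one. This induction is the main (and still routine) step; after it, the index count above finishes the proof.
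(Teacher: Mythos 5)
Your proof is correct and complete. The paper states Lemma \ref{oldnil} without any proof at all (it is invoked as a known elementary fact), so there is no argument to compare against; your induction showing $(A+B)^{k}=\sum_{i+j=k}B^{i}A^{j}$ under $AB=0$, followed by the pigeonhole count at $k=m+n-1$, is the standard way to supply the missing details and it checks out. One cosmetic remark: the hesitation in your second paragraph about which words survive can be deleted entirely, since the clean statement is simply that any word in $A,B$ not of the form $B^{i}A^{j}$ must contain an $A$ immediately followed by a $B$ and hence vanishes; the inductive computation you give at the end already encodes this and is the version worth keeping.
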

\noindent  The following result is proven for Drazin inverse in \cite{hartwig1977group}, and the case of the g-Drazin inverse is established in \cite{gonzalez2004new}.
\begin{theorem}\label{triangular}
    Let $A\in\mathcal{B}(X)$ have the block form representation$$A=\begin{bmatrix}
        A_1 & 0\\
        A_3 & A_2
    \end{bmatrix}.$$ 
    \begin{enumerate}[label=(\roman*)]
        \item If $A_1,A_2$ are Drazin invertible then $A$ is Drazin invertible and
        $$A^D=\begin{bmatrix}
            A_1^D & 0\\
            C & A_2^D\\
        \end{bmatrix},$$ where
        \begin{multline*}
            C=\sum_{n=0}^{i(A_1)-1}(A_2^D)^{n+2}A_3A_1^n(I-A_1A_1^D)+ \sum_{n=0}^{i(A_2)-1}(I-A_2A_2^D)A_2^nA_3(A_1^D)^{n+2}-A_2^DA_3A_1^D.
        \end{multline*}
        \item If $A\text{ and }A_1(\text{or }A_2)$ are Drazin invertible then $A_2(\text{or }A_1)$ is also Drazin invertible. 
    \end{enumerate}

\end{theorem}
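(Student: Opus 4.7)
The plan for part (i) is to take the proposed candidate $A^D=\begin{bmatrix} A_1^D & 0 \\ C & A_2^D \end{bmatrix}$ with $C$ as in the statement, and verify the three defining conditions of the Drazin inverse directly, block by block. Expanding $AA^D$, $A^DA$, and $A^DAA^D$, the diagonal entries reduce immediately to the Drazin identities for $A_1$ and $A_2$ and the $(1,2)$ entries are trivially zero, so all the content is pushed onto the $(2,1)$ entry. Concretely, the commutativity $AA^D=A^DA$ becomes the Sylvester-type equation
\[
A_2 C - C A_1 \;=\; A_2^D A_3 - A_3 A_1^D ,
\]
while $A^DAA^D=A^D$ reduces to the single relation $A_2^\pi C = C A_1 A_1^D + A_2^D A_3 A_1^D$.

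The heart of part (i) is verifying these two identities for the closed form of $C$. I would substitute the three summands of $C$ and push $A_1, A_2$ through using the standard Drazin identities $A_j A_j^D = A_j^D A_j$, $A_j^D A_j A_j^D = A_j^D$, $A_j (A_j^D)^{n+2} = (A_j^D)^{n+1}$, $(A_j^D)^{n+2} A_j = (A_j^D)^{n+1}$, and $A_j^D A_j^\pi = 0$; the two finite sums then telescope, the boundary terms at $n=i(A_j)$ vanish because $A_j^{i(A_j)} A_j^\pi = 0$, and what remains collapses to the required right-hand sides via $I = A_j A_j^D + A_j^\pi$. For the third Drazin condition, $A(I - AA^D)$ is block lower triangular with diagonal blocks $A_1 A_1^\pi$ and $A_2 A_2^\pi$, both nilpotent of indices $i(A_1)$ and $i(A_2)$. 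A direct generalization of Lemma~\ref{oldnil} --- any block lower triangular operator with nilpotent diagonal blocks is nilpotent, as seen by expanding the $k$-th power and noting every term contains either $A_1 A_1^\pi$ or $A_2 A_2^\pi$ raised to a sufficiently high power --- then yields that $A(I - AA^D)$ is nilpotent of index at most $i(A_1) + i(A_2)$.

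For part (ii), decompose $X = X_1 \oplus X_2$ according to the given block form. The plan is first to show that $A^D$ itself is block lower triangular in this decomposition. To this end, note that for every $\lambda \notin \sigma(A)$ one has $\lambda \notin \sigma(A_1) \cup \sigma(A_2)$ --- indeed, invertibility of $\lambda I - A$ forces invertibility of both $\lambda I - A_1$ and $\lambda I - A_2$, which one sees by taking $w = w_1 \oplus 0$ and $w = 0 \oplus w_2$ in the equation $(\lambda I - A) v = w$ --- and hence the resolvent $(\lambda I - A)^{-1}$ is block lower triangular by the explicit formula for inverting a block lower triangular operator. Since $A$ is Drazin invertible, $0$ is isolated in $\sigma(A)$ and $A^D = f(A)$ via the holomorphic functional calculus applied to the function $f$ equal to $z^{-1}$ off a small neighborhood of $0$ and vanishing near $0$; thus $A^D$ lies in the closed linear span of resolvents $(\lambda I - A)^{-1}$ with $\lambda \notin \sigma(A)$, forcing $A^D = \begin{bmatrix} U & 0 \\ Z & W \end{bmatrix}$ for some $U, Z, W$. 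Extracting the $(1,1)$ and $(2,2)$ diagonal blocks of the three Drazin identities then gives $A_1 U = U A_1$, $U A_1 U = U$, and $A_1(I - A_1 U)$ nilpotent (and symmetrically for $W$, using again that the diagonal blocks of a nilpotent block lower triangular operator are nilpotent), so $U = A_1^D$ and $W = A_2^D$; hence both $A_1$ and $A_2$ are Drazin invertible.

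The main obstacle overall is the telescoping bookkeeping in part (i), where two finite sums of different lengths with boundary cancellations governed by the index identities must be handled simultaneously; in part (ii), the subtle step is the use of the holomorphic functional calculus to transfer the block lower triangular structure from the resolvents to $A^D$, since the algebraic Drazin identities alone do not immediately force the $(1,2)$ block of $A^D$ to vanish.
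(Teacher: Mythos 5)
The paper offers no proof of this theorem at all --- it is quoted from Hartwig--Shoaf and Gonz\'{a}lez--Koliha --- so your attempt can only be judged on its own merits. Part (i) is essentially correct and is the standard argument: the block-by-block verification does reduce everything to the two identities you isolate for the $(2,1)$ entry, the two sums telescope exactly as you say (with the boundary terms killed by $A_j^{i(A_j)}(I-A_jA_j^D)=0$), and $A(I-AA^D)$ is block lower triangular with nilpotent diagonal blocks $A_1(I-A_1A_1^D)$ and $A_2(I-A_2A_2^D)$, hence nilpotent of index at most $i(A_1)+i(A_2)$.

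Part (ii), however, contains a genuine error. The claimed spectral inclusion --- that $\lambda\notin\sigma(A)$ forces $\lambda\notin\sigma(A_1)\cup\sigma(A_2)$ --- is false for block lower triangular operators on an infinite-dimensional space. Your argument with $w=w_1\oplus 0$ and $w=0\oplus w_2$ only yields surjectivity of $\lambda I-A_1$ and (since $X_2$ is invariant under $A$) injectivity of $\lambda I-A_2$; it gives neither injectivity of $\lambda I-A_1$ nor surjectivity of $\lambda I-A_2$. Concretely, take the bilateral shift $U$ on $\ell^2(\mathbb{Z})=\ell^2(\mathbb{Z}_{<0})\oplus\ell^2(\mathbb{Z}_{\geq 0})$: it is block lower triangular with respect to this decomposition, its diagonal blocks are a copy of the backward unilateral shift and the forward unilateral shift (each with spectrum the closed unit disc), yet $U$ is unitary. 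Hence the resolvents $(\lambda I-A)^{-1}$ need not be block lower triangular, and neither is $A^D$ in general: here $U^D=U^{-1}$ sends $e_0$ to $e_{-1}$ and so does not leave $\ell^2(\mathbb{Z}_{\geq 0})$ invariant. The same example refutes the stronger statement your argument actually purports to establish, namely that Drazin invertibility of $A$ alone forces Drazin invertibility of both diagonal blocks: $U$ is invertible, yet neither diagonal block is even g-Drazin invertible, since $0$ is an interior point of its spectrum. The tell is that your proof of (ii) never invokes the hypothesis that $A_1$ (or $A_2$) is Drazin invertible; that hypothesis is essential and must enter the argument in a substantive way, as it does in the cited sources.
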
 
\noindent Now, we present the following beneficial outcome.
\begin{lemma}\label{nil}
If $A,\text{}B\in\mathcal{B}(X)$ are $A$-weakly commutative operators and $A\in\mathcal{B}(X)^{nil}$ then $AB\in\mathcal{B}(X)^{nil}$. Moreover, if $A,B$ are $B$-weakly commutative and $B\in\mathcal{B}(X)^{nil}$, then $A+B\in\mathcal{B}(X)^{nil}$.

\begin{proof}
    Since $A,B$ are $A$-weakly commutative operators then there exists $C_1\in\mathcal{B}(X)$ satisfying
    $AB=C_1A\text{ and } BA=AC_1$. Hence for $k\in\mathbb{N}$ we have
    \begin{align*}
        (AB)^k&=\begin{cases}
            A^kBC_1B\cdots B \text{, if $k$ is odd,}\\A^kC_1BC_1\cdots B \text{, if $k$ is even}.
        \end{cases}
    \end{align*}
    Therefore, if $A$ is nilpotent, then $AB$ is also nilpotent. Moreover $A,B$ are $B$-weakly commutative then $AB=BC_2,\text{ }BA=C_2B$ for some operator $C_2\in\mathcal{B}(X)$. Let $k\geq \text{max}\{k_1,k_2\}$, where $k_1,k_2$ are the nilpotency index of $A$ and $B$, respectively. Then $(A+B)^{3k}$ is the sum of the some monomials of the form 
    \begin{equation}\label{ABAB}
        A^{p_1}B^{q_1}A^{p_2}B^{q_2}\cdots A^{p_m}B^{q_m}, \text{ for }1\leq m\leq 3k
    \end{equation}where $p_i,q_j\in\{0,1,2\cdots 3k\}$ for $1\leq i,j\leq m,$ such that $\displaystyle\sum_{i=1}^{m}(p_i+q_i)=3k.$ In fact in (\ref{ABAB}) at least one of $\displaystyle\sum_{i=1}^{m}p_i$,  $\displaystyle\sum_{i=1}^{m}q_i$ is larger than $k$. Without loss of generality, let us assume $\displaystyle\sum_{i=1}^{m}q_i\geq k.$ Then using $B$-weakly commutativity of $A,B$ we obtain 
    \begin{align*}
         A^{p_1}B^{q_1}A^{p_2}B^{q_2}\cdots A^{p_m}B^{q_m}&=B^kT,\text{ for some }T\in\mathcal{B}(X)\\&=0, \text{ for } 1\leq m\leq 3k.
    \end{align*}
    Hence, each monomial in the expansion of $(A+B)^{3k}$ is $0$. Thus, we conclude that $A+B$ is nilpotent.
\end{proof}
\end{lemma}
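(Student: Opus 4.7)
The plan is to prove the two assertions separately; both rely on sliding powers of $A$ (respectively $B$) to the left inside a product using the weakly commutative relations.

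For the first claim, let $C \in \mathcal{B}(X)$ witness the $A$-weakly commutativity, so $AB = CA$ and $BA = AC$. I would first record a \emph{sliding lemma}: for any word $W$ in the letters $B, C$ there is a word $W'$ of the same length in $B, C$ with $WA = AW'$. This is immediate by induction on the length of $W$ from the two identities $BA = AC$ and $CA = AB$ (each push past $A$ swaps the letter $B \leftrightarrow C$). Iterating in the second variable gives $W A^j = A^j W''$ for some word $W''$. A straightforward induction on $k$ then yields $(AB)^k = A^k W_k$ for some $W_k \in \mathcal{B}(X)$; taking $k$ equal to the nilpotency index of $A$ kills $A^k$ and hence $(AB)^k = 0$.

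For the second claim, I assume $A, B$ are $\{A,B\}$-weakly commutative and both nilpotent with indices $k_1, k_2$, and set $k = \max\{k_1, k_2\}$. Expand $(A+B)^{2k}$ as a sum of monomials of the form
\[
A^{p_1} B^{q_1} A^{p_2} B^{q_2} \cdots A^{p_m} B^{q_m}, \qquad \sum_{i}(p_i + q_i) = 2k.
\]
By pigeonhole, in each such monomial either $\sum_i p_i \geq k$ or $\sum_i q_i \geq k$. In the first case, invoking the sliding lemma in its $A$-weakly commutative form, I push all $A$'s to the far left and rewrite the monomial as $A^{\sum_i p_i} T$ for some $T \in \mathcal{B}(X)$, which is zero because $\sum_i p_i \geq k \geq k_1$. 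In the second case the symmetric sliding lemma for $B$-weakly commutativity (with some witness $D$ satisfying $AB = BD$ and $BA = DB$) rewrites the monomial as $B^{\sum_i q_i} S$, again zero. Thus every monomial in the expansion of $(A+B)^{2k}$ vanishes, so $A+B$ is nilpotent.

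The main obstacle is purely notational: the weakly commutative relations do not preserve the symbol $B$ (respectively $A$) when one slides $A$ (respectively $B$) past it, so one cannot simply collapse the product to something like $A^k B^k$. The cleanest way to organise the argument, which I would adopt throughout, is to abstract the manipulation into the sliding lemma above and then never track the precise shape of the resulting word in $B, C$ (or $A, D$); all that is ever used is that a sufficiently large power of a nilpotent operator has been exposed on the left.
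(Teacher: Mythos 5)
Your proposal is correct and follows essentially the same route as the paper: the paper also proves the first claim by sliding $A^k$ to the left of $(AB)^k$ (swapping $B\leftrightarrow C_1$ at each pass), and the second by expanding a power of $A+B$ into monomials, observing by pigeonhole that one of $\sum p_i,\sum q_i$ is at least the relevant nilpotency index, and sliding that nilpotent factor to the front. The only cosmetic differences are that you use the exponent $2k$ where the paper uses $3k$, and that you isolate the word-rewriting step as an explicit sliding lemma rather than carrying it out inline.
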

\begin{remark}
    In Lemma \ref{nil}, if we take $A^2B=BA^2$ and $AB^2=B^2A$, instead of taking $\{A,B\}$-weakly commutativity, then, in general, $A+B$ may not be nilpotent. This can be observed in the following example.
\end{remark}
\begin{example}Let
    $$A=\begin{bmatrix}
        0 & 0\\
        1 & 0
    \end{bmatrix}\text{, }B=\begin{bmatrix}
        0 & 1\\
        0 & 0
    \end{bmatrix}\in M_2(\mathbb{C}).$$ Then $A,\text{ }B$ are nilpotent and $A^2B=BA^2\text{, }AB^2=B^2A$,  but $A+B=\begin{bmatrix}
        0 & 1\\
        1 & 0
    \end{bmatrix}$, which is not nilpotent.
\end{example}
\noindent The following lemma is important for our main results.
\begin{lemma}\label{new_inv}
    If $A\in \mathcal{B}(X)^{inv}$, $B\in \mathcal{B}(X)^{nil}$ such as $A,B$ are $B$-weakly commutative, and $A^2B=BA^2,$ then $A+B\in\mathcal{B}(X)^{inv}$.
    \begin{proof}
        Let $B\in \mathcal{B}(X)^{nil}$ and $A,B$ are $B$-weakly commutative then by Lemma \ref{nil}, $AB$ is nilpotent. Furthermore since $A\in \mathcal{B}(X)^{inv}$, therefore we have $$A+B=A(I+A^{-1}B).$$ Now $A$ is invertible and $A^2B=BA^2$ implies $A^{-2}(AB)=(AB)A^{-2}$. Then we have
        \begin{multline*}
            (I+A^{-1}B)^{-1} =(I+A^{-2}AB)^{-1}
            =I-A^{-2}AB+(A^{-2})^2(AB)^2+\cdots +(-1)^{m-1}(A^{-2})^{m-1}(AB)^{m-1},
        \end{multline*}
      where $m\text{ is the nilpotency index of }AB$. Hence we obtain  $A+B=A(I+A^{-1}B)$ is invertible.    
    \end{proof}
\end{lemma}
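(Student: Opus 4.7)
The plan is to factor $A+B=A(I+A^{-1}B)$ using the invertibility of $A$, and then to invert $I+A^{-1}B$ by means of a finite Neumann-type series. The scheme hinges on two facts: that $AB$ is nilpotent, and that $A^{-2}$ commutes with $AB$.

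First I would establish that $AB$ is nilpotent. The defining equations of $B$-weakly commutativity, $AB=BC$ and $BA=CB$, are symmetric under interchange of the names $A$ and $B$, so $B$ and $A$ are likewise $B$-weakly commutative. Since $B$ is nilpotent, the first half of Lemma~\ref{nil}, applied with $B$ playing the role of the ``first'' operator, yields that $BA$ is nilpotent; from the identity $(AB)^{k+1}=A(BA)^{k}B$ it follows that $AB$ is nilpotent as well, of some index $m$.

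Next, the hypothesis $A^{2}B=BA^{2}$ gives $A^{2}(AB)=A^{3}B=A(BA^{2})=(AB)A^{2}$, so $A^{2}$, and therefore $A^{-2}$, commutes with $AB$. Writing $A^{-1}B=A^{-2}(AB)$, the element $A^{-2}(AB)$ is a product of two commuting operators one of which is nilpotent, hence is itself nilpotent of index at most $m$. A finite geometric series therefore produces the inverse:
\[
(I+A^{-1}B)^{-1}=\sum_{j=0}^{m-1}(-1)^{j}\bigl(A^{-2}(AB)\bigr)^{j}.
\]
Combining this with the factorization $A+B=A(I+A^{-1}B)$ yields invertibility of $A+B$, with $(A+B)^{-1}=(I+A^{-1}B)^{-1}A^{-1}$.

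The main obstacle I anticipate is simply the bookkeeping around Lemma~\ref{nil}: its first assertion is phrased for $A$-weakly commutativity, whereas here the hypothesis is $B$-weakly commutativity, so one has to explicitly note the symmetry of the relation (together with the standard $AB$--$BA$ swap for nilpotency) before invoking the lemma. Once $AB$ is shown to be nilpotent, the remaining steps are a routine application of the commuting-nilpotent Neumann series, with the condition $A^{2}B=BA^{2}$ doing exactly the work needed to absorb $A^{-2}$ past $AB$.
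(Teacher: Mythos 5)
Your proposal is correct and follows essentially the same route as the paper: factor $A+B=A(I+A^{-1}B)$, show $AB$ is nilpotent via Lemma \ref{nil}, use $A^2B=BA^2$ to commute $A^{-2}$ past $AB$, and invert $I+A^{-2}(AB)$ with a finite Neumann series. Your extra care in justifying the application of Lemma \ref{nil} (noting the symmetry of $B$-weak commutativity and passing from $BA$ nilpotent to $AB$ nilpotent) only makes explicit a step the paper glosses over.
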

 \begin{remark}
     
 In Lemma \ref{new_inv}, if $A,\text{}B$ is not $B$-weakly commutative, then $A+B$ may not be invertible; the next example describes that.
 \end{remark} 
  \begin{example}
     Let $A=\begin{bmatrix}
         0 & 1\\
         1 & 0
     \end{bmatrix}$ and $B=\begin{bmatrix}
         0 & -1\\
         0 & 0
     \end{bmatrix}\in M_2(\mathbb{C})$. Here $A,\text{ }B$ is not $B$-weakly commutative but $A^2B=BA^2.$ Moreover $A$ is invertible, $B$ is nilpotent, but $A+B$ is not invertible. 
 \end{example}
\begin{corollary}\label{inv}
    If $A\in \mathcal{B}(X)^{inv}$, $B\in \mathcal{B}(X)^{nil}$ such as $A,B$ are $\{A,B\}$-weakly commutative. Then $A+B\in\mathcal{B}(X)^{inv}$.
    \begin{proof}
        If $A,B$ are $A$-weakly commutative, then by Lemma \ref{2commutative}, $A^2B=BA^2.$ Therefore the required result follows from Lemma \ref{new_inv}.
    \end{proof}
\end{corollary}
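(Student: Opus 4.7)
The plan is to reduce the corollary directly to Lemma \ref{new_inv}, which already gives invertibility of $A+B$ under a slightly weaker hypothesis (namely that $A,B$ are only $B$-weakly commutative, but satisfy the algebraic identity $A^2B=BA^2$). Since $\{A,B\}$-weakly commutativity is defined as the conjunction of $A$-weakly commutativity and $B$-weakly commutativity, we already have the $B$-weakly commutativity required by Lemma \ref{new_inv} for free.

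First, I would unpack the $\{A,B\}$-weakly commutativity assumption into its two halves: the existence of operators $C_1,C_2\in\mathcal{B}(X)$ with $AB=C_1A$, $BA=AC_1$, $AB=BC_2$, $BA=C_2B$. Next, from the $A$-weakly commutativity alone, Lemma \ref{2commutative} immediately gives $A^2B=BA^2$, which is precisely the remaining algebraic identity demanded by Lemma \ref{new_inv}. With $A\in\mathcal{B}(X)^{inv}$, $B\in\mathcal{B}(X)^{nil}$, $B$-weakly commutativity of $A,B$, and $A^2B=BA^2$ all in place, invoking Lemma \ref{new_inv} yields $A+B\in\mathcal{B}(X)^{inv}$.

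There is no real obstacle here, as this is a bookkeeping corollary rather than a new theorem; the mildly interesting observation is simply that the two symmetric halves of the $\{A,B\}$-weak commutativity hypothesis play asymmetric roles in the proof, with the $A$-half contributing only the squared commutation identity via Lemma \ref{2commutative}, while the $B$-half is what actually feeds into the nilpotency-and-inversion argument of Lemma \ref{new_inv}.
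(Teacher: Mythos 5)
Your proposal is correct and follows exactly the paper's own argument: extract $A^2B=BA^2$ from the $A$-weak commutativity via Lemma \ref{2commutative}, then feed the $B$-weak commutativity and this identity into Lemma \ref{new_inv}. Nothing further is needed.
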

\noindent Next, we demonstrate one of the main theorems of this section. 
\begin{theorem}\label{Drazin_main}
    If $A,\text{}B\in\mathcal{B}(X)^D$ such as $A,B$ are $\{A,B\}$-weakly commutative, then the followings are equivalent:
    \begin{enumerate}[label=(\roman*)]
        \item $A+B\in \mathcal{B}(X)^D;$ \label{con_1}
        \item $AA^D(A+B)BB^D\in \mathcal{B}(X)^D;$\label{con_2}
        \item $AA^D(A+B)\in \mathcal{B}(X)^D;$\label{con_3}
        \item $(A+B)BB^D \in \mathcal{B}(X)^D.$\label{con_4}
    \end{enumerate}
    Before proceeding with the proof of Theorem \ref{Drazin_main}, we must establish the following result, which holds significance in the context of this theorem:

\begin{theorem}\label{Dra_Nil}
    Let $A\in \mathcal{B}(X)^D$, $B\in\mathcal{B}(X)^{nil}$ such as  $A,B$ are $\{A,B\}$-weakly commutative. Then $A+B\in\mathcal{B}(X)^D$ and \begin{align*}
                (A+B)^D&=(I+A^DB)^{-1}A^D
                \\&=\left(\sum_{n=0}^{i(A)-1}(A^D)^{2n}(-AB)^n\right)A^D.
            \end{align*}
    \begin{proof}
    
       Since $A\in\mathcal{B}(X)^D$, then $X$  has the decomposition $X=\mathcal{R}(AA^D)\oplus \mathcal{N}(AA^D)$ and corresponding to this decompositon of $X$, $A$ have the block form $$A=A_1\oplus A_2,$$ where $A_1$ is invertible operator on $\mathcal{R}(AA^D)$ and $A_2$ is nilpotent operator on $\mathcal{N}(AA^D).$ Moreover $A,B$ are $\{A,B\}$-weakly commutative means $AB=C_1A,\text{ }BA=AC_1$ and $AB=BC_2,\text{ }BA=C_2B$ for two operators $C_1,\text{ }C_2\in\mathcal{B}(X)$. Since $AB=C_1A,\text{ }BA=AC_1$, then both the spaces $\mathcal{R}(AA^D)\text{ and } \mathcal{N}(AA^D)$ are invariant under $B$ and $C_1.$  Therefore relative to the decomposition $X=\mathcal{R}(AA^D)\oplus \mathcal{N}(AA^D)$, $B$ have the block representation $$B=B_1\oplus B_2,$$ where both $B_1\text{ and }B_2$ are nilpotent operators on $\mathcal{R}(AA^D)$ and $\mathcal{N}(AA^D)$, respectively. Similarly we have $$C_1=C_{11}\oplus C_{12},$$ where $C_{11}\text{ , }C_{12}$ are two operators on $\mathcal{R}(AA^D)$ and $\mathcal{N}(AA^D),$ respectively. Using this block forms of $A,B\text{ and }C_1$ in $AB=C_1A,\text{ }BA=AC_1$ we obtain 
        \begin{align*}
             A_1B_1=C_{11}A_1,\text{ }B_1
            A_1=A_1C_{11}\text{ and } A_2B_2=C_{12}A_2,\text{ }B_2A_2=A_2C_{12}.
        \end{align*}
 Again using this block forms in $AB=BC_2, \text{ } BA=C_2B$ we get $$ A_1B_1=B_1C_{21},\text{ }B_1
            A_1=C_{21}B_1\text{ and } A_2B_2=B_2C_{22},\text{ }B_2A_2=C_{22}B_2,$$ where  $\begin{bmatrix}
            C_{21} & C_{23}\\
            C_{24} & C_{22}
            \end{bmatrix}
            $ is the block representation of $C_2$ corresponding to the space decomposition $X=\mathcal{R}(AA^D)\oplus \mathcal{N}(AA^D).$ Hence we get that $A_1,B_1$ are $\{A_1,B_1\}$-weakly commutative and $A_2,B_2$ are $\{A_2,B_2\}$-weakly commutative. Therefore, by Lemma \ref{nil}, $A_2+B_2$ is nilpotent, and $A_1+B_1$ is invertible by Corollary \ref{inv}. Hence we obtain $A+B=(A_1+B_1)\oplus (A_2+B_2)\in\mathcal{B}(X)^D$ by Theorem \ref{triangular}, and 
            \begin{align*}
                (A+B)^D&=(A_1+B_1)^{-1}\oplus 0
                \\&=(I+A^DB)^{-1}A^D
                \\&=\left(\sum_{n=0}^{i(A)-1}(A^D)^{2n}(-AB)^n\right)A^D,
            \end{align*}
            as desired.
            \end{proof}
\end{theorem}
    \begin{proof}[Proof of Theorem \ref{Drazin_main}]
        Since $A,B$ are $\{A,B\}$-weakly commutative, then similar to the proof of Theorem \ref{Dra_Nil}, we get $$A=A_1\oplus A_2\text{ and }B=B_1\oplus B_2,$$ where $A_1$ is an invertible operator on $\mathcal{R}(AA^D)$, $A_2$ is an nilpotent operator on $\mathcal{N}(AA^D)$ and $B_1,B_2$ are two Drazin invertible operators on $\mathcal{R}(AA^D) $ and $ \mathcal{N}(AA^D)$, respectively. Moreover, similar to the proof of Theorem \ref{Dra_Nil} we also obtain $A_1,B_1$ are $\{A_1,B_1\}$-weakly commutative and $A_2,B_2$ are $\{A_2,B_2\}$-weakly commutative. Since $A_2$ is nilpotent, and $B_2$ is Drazin invertible therefore, by Theorem \ref{Dra_Nil}, $A_2+B_2$ is Drazin invertible. Thus $A+B=(A_1+ B_1)\oplus (A_2+B_2)\in\mathcal{B}(X)^D$ if and only if $A_1+B_1$ is Drazin invertible, hence \ref{con_1}$\iff $\ref{con_3}. Again since $A_1,\text{ }B_1$ are $A_1\text{ and }B_1$-weakly commutative, then corresponding to the decomposition $$\mathcal{R}(AA^D)=\mathcal{R}(B_1B_1^D)\oplus \mathcal{N}(B_1B_1^D),$$ $B_1\text{ and }A_1$ has the block representation $B_1=B_{11}\oplus B_{12}$ and $A_1=A_{11}\oplus A_{12}$, respectively. Furthermore $B_{11},\text{ }A_{11}$ are $\{A_{11},B_{11}\}$-weakly commutative and $B_{12},\text{}A_{12}$ are $\{B_{12},A_{12}\}$-weakly commutative. Since $A_{12}$ is invertible and $B_{12}$ is nilpotent, then by Corollary \ref{inv}, we get $A_{12}+B_{12}$ is invertible. Hence by Theorem \ref{triangular}, $A_1+B_1=(A_{11}+B_{11})\oplus (A_{12}+B_{12})$ is Drazin invertible if and only if $A_{11}+B_{11}$ is Drazin invertible, thus we obtain \ref{con_1}$\iff$ \ref{con_2}, \ref{con_1}$\iff$ \ref{con_4}.
    \end{proof}
\end{theorem}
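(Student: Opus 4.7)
My plan is to reduce each of the four conditions to the Drazin invertibility of a single ``regular'' corner via a cascade of Drazin-induced block decompositions. First I would run exactly the setup of Theorem \ref{Dra_Nil}: split $X=\mathcal{R}(AA^D)\oplus\mathcal{N}(AA^D)$, so that $A=A_1\oplus A_2$ with $A_1$ invertible and $A_2$ nilpotent; the $A$-weak commutativity forces both summands to be invariant under $B$, giving $B=B_1\oplus B_2$, and the blocks inherit $\{A_i,B_i\}$-weak commutativity by a direct diagonal-block computation (as in Theorem \ref{Dra_Nil}). Since $A_2$ is nilpotent and $B_2\in\mathcal{B}(\mathcal{N}(AA^D))^D$ with $\{A_2,B_2\}$-weak commutativity, Theorem \ref{Dra_Nil} yields $A_2+B_2\in\mathcal{B}(\mathcal{N}(AA^D))^D$. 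Hence, by Theorem \ref{triangular}, $A+B=(A_1+B_1)\oplus(A_2+B_2)$ is Drazin invertible iff $A_1+B_1$ is. Reading $AA^D(A+B)=(A_1+B_1)\oplus 0$ in the same block form immediately yields \ref{con_1}$\iff$\ref{con_3}.

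Next I would decompose the ``regular'' summand further according to $B_1$'s Drazin splitting: $\mathcal{R}(AA^D)=\mathcal{R}(B_1B_1^D)\oplus\mathcal{N}(B_1B_1^D)$, giving $B_1=B_{11}\oplus B_{12}$ with $B_{11}$ invertible and $B_{12}$ nilpotent. The $\{A_1,B_1\}$-weak commutativity now makes both summands invariant under $A_1$, so $A_1=A_{11}\oplus A_{12}$, and both $A_{1j}$ are invertible because $A_1$ is. Corollary \ref{inv} applied to the second block gives $A_{12}+B_{12}$ invertible, and Theorem \ref{triangular} then reduces the Drazin invertibility of $A_1+B_1$ to that of $A_{11}+B_{11}$. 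Tracking $AA^D(A+B)BB^D$ through the two decompositions, one finds that it equals $(A_{11}+B_{11})\oplus 0\oplus 0$, which yields \ref{con_1}$\iff$\ref{con_2}.

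For \ref{con_1}$\iff$\ref{con_4} I would mirror the previous step on the nilpotent summand: split $\mathcal{N}(AA^D)=\mathcal{R}(B_2B_2^D)\oplus\mathcal{N}(B_2B_2^D)$ to write $B_2=B_{21}\oplus B_{22}$ (with $B_{21}$ invertible and $B_{22}$ nilpotent) and $A_2=A_{21}\oplus A_{22}$ (both nilpotent, since $A_2$ is). Applying Corollary \ref{inv} with the roles of $A$ and $B$ swapped (legitimate because $\{A_{21},B_{21}\}$-weak commutativity is symmetric in its two arguments) shows $A_{21}+B_{21}$ is invertible. Collecting everything, $(A+B)BB^D$ has the block form $(A_{11}+B_{11})\oplus 0\oplus(A_{21}+B_{21})\oplus 0$, and the invertible corner $A_{21}+B_{21}$ is irrelevant to the Drazin invertibility question, so this operator is Drazin invertible iff $A_{11}+B_{11}$ is, iff $A+B$ is.

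The main obstacle is not a difficult idea but the bookkeeping at each stage: one must verify that every newly introduced subspace is invariant under the operators in play (so the block decomposition is valid), check that the $\{\cdot,\cdot\}$-weak commutativity descends intact to every diagonal block (the off-diagonal entries of the witnesses $C_1,C_2$ need not vanish, but the diagonal entries still witness the required relations on the blocks), and then patiently match the block forms with $AA^D(A+B)BB^D$, $AA^D(A+B)$, and $(A+B)BB^D$. Once these structural checks are in hand, the four equivalences become immediate consequences of Theorem \ref{triangular}, Corollary \ref{inv}, and Theorem \ref{Dra_Nil}.
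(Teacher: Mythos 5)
Your proposal is correct and follows essentially the same route as the paper: the cascade of spectral decompositions along $AA^D$ and then along $B_1B_1^D$, with Theorem \ref{Dra_Nil}, Corollary \ref{inv} and Theorem \ref{triangular} doing all the work, reducing each condition to the Drazin invertibility of the regular--regular corner $A_{11}+B_{11}$. If anything, your treatment of \ref{con_4} is more careful than the paper's, which does not explicitly decompose $\mathcal{N}(AA^D)$ along $B_2B_2^D$ to dispose of the summand $(A_2+B_2)B_2B_2^D$.
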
 
\noindent Now, we present the following finding,  which includes an  expression for $(A+B)^D$ as well.
\begin{theorem}
    Let $A,B\in\mathcal{B}(X)^D$, such as $A,B$ are $\{A,B\}$-weakly commutative and $A^2A^D(I+A^DB)\in\mathcal{B}(X)^D$. Then $A+B\in \mathcal{B}(X)^D$, and
    \begin{align*}
          (A+B)^D&{}=\left(A^2A^D(I+A^DB)\right)^D+(I-AA^D)B^D\left(A(I-AA^D)B^D+I\right)^{-1}
        \\ {}&=\left(A^2A^D(I+A^DB)\right)^D+(I-AA^D)B^D\left(\sum_{n=0}^{i(A)-1}(-AB)^{n-1}(B^D)^{2n}\right).
    \end{align*}
    \begin{proof}
        Since here $A,B$ are $\{A,B\}$-weakly commutative, therefore as in the proof of Theorem \ref{Dra_Nil}, we have $$A=A_1\oplus A_2 \text{ and }B=B_1\oplus B_2,$$ where $A_1,B_1$ are $\{A_1,B_1\}$-weakly commutative and  $A_2,B_2$ are $\{A_2$,$B_2\}$-weakly commutative. Here $A^2A^D(I+A^DB)\in\mathcal{B}(X)^D$, therefore  $(A_1+B_1)\in\mathcal{B}(\mathcal{R}(AA^D))^D$ and by Theorem \ref{triangular}, we have 
        \begin{equation}\label{101}
            (A_1+B_1)^D\oplus 0=\left(A^2A^D(I+A^DB)\right)^D.
        \end{equation}
         Furthermore $A_2,B_2$ are $\{A_2,B_2\}$-weakly commutative and $A_2$ is nilpotent, $B_2$ is Drazin invertible in $\mathcal{N}(AA^D).$ Therefore by Theorem \ref{Dra_Nil}, $A_2+B_2\in(\mathcal{N}(AA^D))^D$ and 
        \begin{align}
            0\oplus (A_2+B_2)^D&=(I-AA^D)B^D\left(A(I-AA^D)B^D+I\right)^{-1}\label{102} \\&=(I-AA^D)B^D\left(\sum_{n=0}^{i(A)-1}(-AB)^{n-1}(B^D)^{2n}\right).\label{103}
        \end{align}
       Hence the required result follows from Theorem \ref{triangular}, equation (\ref{101}), (\ref{102}) and (\ref{103}).  
    \end{proof}
\end{theorem}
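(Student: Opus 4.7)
The plan is to reduce the problem to the Drazin decomposition $X=\mathcal{R}(AA^D)\oplus\mathcal{N}(AA^D)$ of $A$ already exploited in Theorem \ref{Dra_Nil} and then recombine the two pieces using the block-diagonal version of Theorem \ref{triangular}. Specifically, I would recycle the construction from the proof of Theorem \ref{Dra_Nil}: write $A=A_1\oplus A_2$ and $B=B_1\oplus B_2$ with $A_1$ invertible, $A_2$ nilpotent, and both $B_1,B_2$ Drazin invertible (the latter because $B=B_1\oplus B_2$ is block diagonal and Drazin invertible). The $\{A,B\}$-weak commutativity descends, so each pair $(A_i,B_i)$ is $\{A_i,B_i\}$-weakly commutative.

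On the nilpotent block, since $A_2$ is nilpotent and $B_2$ is Drazin invertible, Theorem \ref{Dra_Nil} — applied with the roles of the Drazin and nilpotent operators swapped, which is permissible because the weak-commutativity hypothesis is symmetric — yields $A_2+B_2\in\mathcal{B}(\mathcal{N}(AA^D))^D$ together with $(A_2+B_2)^D=(I+B_2^DA_2)^{-1}B_2^D$. On the invertible block I would identify the hypothesis $A^2A^D(I+A^DB)\in\mathcal{B}(X)^D$ with the Drazin invertibility of $A_1+B_1$: using $A^D=A_1^{-1}\oplus 0$ and $AA^D=I\oplus 0$, a direct block computation gives $A^2A^D(I+A^DB)=(A_1+B_1)\oplus 0$, so the element on the left is Drazin invertible iff $A_1+B_1\in\mathcal{B}(\mathcal{R}(AA^D))^D$ and $\bigl(A^2A^D(I+A^DB)\bigr)^D=(A_1+B_1)^D\oplus 0$.

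Since $A+B=(A_1+B_1)\oplus(A_2+B_2)$ is block diagonal, Theorem \ref{triangular} with $A_3=0$ yields $A+B\in\mathcal{B}(X)^D$ and $(A+B)^D=(A_1+B_1)^D\oplus(A_2+B_2)^D$. To match each block with the displayed formula, the first summand is exactly $\bigl(A^2A^D(I+A^DB)\bigr)^D$ by the previous paragraph. For the second summand, the block computations $(I-AA^D)B^D=0\oplus B_2^D$ and $A(I-AA^D)B^D=0\oplus A_2B_2^D$ show that $(I-AA^D)B^D\bigl(A(I-AA^D)B^D+I\bigr)^{-1}$ equals $0\oplus B_2^D(I+A_2B_2^D)^{-1}$, which coincides with $(A_2+B_2)^D$ via the standard identity $(I+XY)^{-1}X=X(I+YX)^{-1}$ applied with $X=B_2^D,\ Y=A_2$. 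Expanding $(I+A(I-AA^D)B^D)^{-1}$ as a finite Neumann series — valid because $A_2B_2^D$ is nilpotent by the same argument as Lemma \ref{nil}, after transferring weak commutativity from $B_2$ to $B_2^D$ — produces the polynomial form in the second displayed equality.

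The main obstacle will be the last step: correctly handling the noncommutativity when unrolling the Neumann series and reconciling the resulting finite polynomial with the exact monomial expression $\sum_{n=0}^{i(A)-1}(-AB)^{n-1}(B^D)^{2n}$ stated in the theorem. In particular, one needs the small but essential bookkeeping fact that $\{A,B\}$-weak commutativity passes to a usable weak-commutativity relation between $A$ and $B^D$, so that $A_2B_2^D$ is not only nilpotent but nilpotent of index at most $i(A)$; this is what pins down the upper summation limit in the closed-form expression.
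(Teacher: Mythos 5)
Your proposal follows the paper's own proof essentially verbatim: the same splitting $A=A_1\oplus A_2$, $B=B_1\oplus B_2$ along $X=\mathcal{R}(AA^D)\oplus\mathcal{N}(AA^D)$, the same identification $A^2A^D(I+A^DB)=(A_1+B_1)\oplus 0$, the same application of Theorem \ref{Dra_Nil} with the roles of the Drazin-invertible and nilpotent summands swapped on the $\mathcal{N}(AA^D)$ block, and the same recombination via Theorem \ref{triangular}. If anything you are more careful than the paper, which silently relies on the identity $(I+XY)^{-1}X=X(I+YX)^{-1}$ that you make explicit and never addresses the index bookkeeping in the closed-form series (whose printed expression $\sum_{n=0}^{i(A)-1}(-AB)^{n-1}(B^D)^{2n}$ indeed contains a misprint at $n=0$), so the one step you flag as delicate is a defect of the stated formula rather than of your argument.
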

Similar to the previous result, using Theorem \ref{Drazin_main} and Theorem \ref{Dra_Nil} one can establish the following corollary, which provides representations for $(A+B)^D$.
\begin{corollary}
Let $A,B\in\mathcal{B}(X)^D,$ such as $A.B$ are $\{A,B\}$-weakly commutative.  
\begin{enumerate}[label=(\roman*)]

    \item If $AA^D(A+B)BB^D\in\mathcal{B}(X)^D$ then 
    $$\begin{aligned}[t]
        (A+B)^D={}&\left(AA^D(A+B)BB^D\right)^D+(I-AA^D)B^D\left(\sum_{n=0}^{i(A)-1}(-AB)^n(B^D)^{2n}\right)\\{}&+\left(\sum_{n=0}^{i(B)-1}(A^D)^{2n}(-AB)^n\right)A^D(I-BB^D)
    \end{aligned}$$
    \item If $AA^D(A+B)\in\mathcal{B}(X)^D,$ then $$(A+B)^D=\left(AA^D(A+B)\right)^D+(I-AA^D)B^D\left(\sum_{n=0}^{i(A)-1}(-AB)^n(B^D)^{2n}\right),$$
    \item If $(A+B)BB^D\in\mathcal{B}(X)^D$, then $$(A+B)^D=\left((A+B)BB^D\right)^D+\left(\sum_{n=0}^{i(B)-1}(A^D)^{2n}(-AB)^n\right)A^D(I-BB^D).$$

\end{enumerate}
\end{corollary}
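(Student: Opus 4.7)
The plan is to reuse the block-decomposition strategy from the proofs of Theorem \ref{Drazin_main} and Theorem \ref{Dra_Nil}, and to interpret each summand in the claimed formula as the Drazin inverse of one block of $A+B$, extended by zero to all of $X$. For case (i), I would start with $X=\mathcal{R}(AA^D)\oplus\mathcal{N}(AA^D)$, giving $A=A_1\oplus A_2$ and $B=B_1\oplus B_2$ exactly as in the proof of Theorem \ref{Dra_Nil}, and then refine $\mathcal{R}(AA^D)=\mathcal{R}(B_1B_1^D)\oplus\mathcal{N}(B_1B_1^D)$ to split $A_1=A_{11}\oplus A_{12}$ (both invertible) and $B_1=B_{11}\oplus B_{12}$ ($B_{11}$ invertible, $B_{12}$ nilpotent), with each pair inheriting $\{A_{ij},B_{ij}\}$-weak commutativity. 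In the resulting three-block picture $A+B=(A_{11}+B_{11})\oplus(A_{12}+B_{12})\oplus(A_2+B_2)$, the middle summand is invertible by Corollary \ref{inv}, the last is Drazin invertible by Theorem \ref{Dra_Nil}, and the hypothesis together with Theorem \ref{Drazin_main} forces $A_{11}+B_{11}$ to be Drazin invertible; hence $(A+B)^D=(A_{11}+B_{11})^D\oplus(A_{12}+B_{12})^{-1}\oplus(A_2+B_2)^D$.

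Next, I would identify each of the three summands in the corollary with one of these blocks. A direct block-wise computation gives $AA^D(A+B)BB^D=(A_{11}+B_{11})\oplus 0\oplus 0$, whose Drazin inverse is the first block. For the third summand, $A^D(I-BB^D)$ vanishes on the first and third blocks and restricts to $A_{12}^{-1}$ on the second; applying Theorem \ref{Dra_Nil} to the invertible-plus-nilpotent pair $(A_{12},B_{12})$ then identifies the sum with $0\oplus(A_{12}+B_{12})^{-1}\oplus 0$, the upper limit $i(B)-1$ being sufficient because the nilpotency index of $B_{12}$ is bounded by $i(B)$. The middle summand $(I-AA^D)B^D\sum_{n=0}^{i(A)-1}(-AB)^n(B^D)^{2n}$ is supported only on the third block, where it should equal $(A_2+B_2)^D$.

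The one nontrivial point is to match this last expression against Theorem \ref{Dra_Nil} applied with swapped roles, which yields $(A_2+B_2)^D=\bigl(\sum_n(B_2^D)^{2n}(-B_2A_2)^n\bigr)B_2^D$. These two formulas coincide termwise: one repeatedly uses $B_2(B_2^D)^2=B_2^D$ to pull factors of $B_2^D$ across, together with the commutation $A_2(B_2^D)^2=(B_2^D)^2A_2$, which holds because $AB^2=B^2A$ by Lemma \ref{2commutative} and $(B^D)^2=(B^2)^D$ lies in the bicommutant of $B^2$. This reordering is the main algebraic obstacle. Cases (ii) and (iii) then follow by the same scheme with fewer blocks: (ii) needs only the decomposition $A=A_1\oplus A_2$ together with the identity $AA^D(A+B)=(A_1+B_1)\oplus 0$, while (iii) uses the analogous refinement $\mathcal{N}(AA^D)=\mathcal{R}(B_2B_2^D)\oplus\mathcal{N}(B_2B_2^D)$ to split off a fourth block $A_{22}+B_{22}$ (nilpotent by Lemma \ref{nil}), after which the identification of the correction term is completely analogous.
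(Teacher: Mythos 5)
Your proposal is correct and follows essentially the route the paper intends: the paper states this corollary without proof, deferring to the block-decomposition argument of the preceding theorem together with Theorems \ref{Drazin_main} and \ref{Dra_Nil}, which is exactly what you carry out (splitting along $\mathcal{R}(AA^D)\oplus\mathcal{N}(AA^D)$, refining by $B_1B_1^D$, and identifying each summand of the formula with the Drazin inverse of one block). Your explicit termwise verification that $(I-AA^D)B^D\sum_{n}(-AB)^n(B^D)^{2n}$ matches the swapped-roles expression $\bigl(\sum_{n}(B^D)^{2n}(-BA)^n\bigr)B^D$ from Theorem \ref{Dra_Nil}, via $B(B^D)^2=B^D$ and $A(B^D)^2=(B^D)^2A$, supplies a detail the paper leaves implicit.
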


\noindent The subsequent outcome presents a broader perspective of Theorem \ref{Dra_Nil}.
\begin{theorem}
    Let $A\in\mathcal{B}(X)^D\text{, }B\in\mathcal{B}(X)^{nil}$ such as $A,B$ are $B$-weakly commutative. If $AA^D(A^2B-BA^2)=0$, $(A^2B-BA^2)AA^D=0$, $A^{\pi}(AB-CA)=0$ and $A^{\pi}(BA-AC)=0$ for some operator $C\in\mathcal{B}(X)$, then $A+B\in\mathcal{B}(X)^D.$
    \begin{proof}
    Since $A\in\mathcal{B}(X)^D$, therefore $A$ have the block representation $$A=A_1\oplus A_2,$$
         where $A_1\in\mathcal{B}(\mathcal{R}(AA^D))^{inv},$ and $A_2\in \mathcal{B}(\mathcal{N}(AA^D))^{nil}.$ Again since $AA^D(A^2B-BA^2)=0$ and $(A^2B-BA^2)AA^D=0$ then $B$ also have the block representation $$B=B_1\oplus B_2,$$ where $B_1,B_2$ both are nilpotent operators on $\mathcal{R}(AA^D)$ and $\mathcal{N}(AA^D),$ respectvely. Moreover $A,B$ are $B$-weakly commutative then $A_1,B_1$ are $B_1$-weakly commutative and $A_2,B_2$ are $B_2$-weakly commutative. Furthermore using these block representations of $A$ and $B$ on the condition $A^{\pi}(AB-CA)=0$ and $A^{\pi}(BA-AC)=0$ we obtain $A_2,B_2$ are $A_2$-weakly commutative also. Hence we have $A_2,B_2$ are $\{A_2,B_2\}$-weakly commutative, therefore by Lemma \ref{nil}, $A_2+B_2\in\mathcal{B}(\mathcal{N}(BB^D))^{nil}.$ Again from $AA^D(A^2B-BA^2)=0$ we obtain $A_1^2B_1=B_1A_1^2$, therefore by Lemma \ref{new_inv}, $A_1+B_1\in\mathcal{B}(\mathcal{R}(AA^D))^{inv}.$ Hence using Theorem \ref{triangular}, we obtain $$A+B=(A_1+B_1)\oplus (A_2+B_2)\in\mathcal{B}(X)^D,$$ as desired.
    \end{proof}
\end{theorem}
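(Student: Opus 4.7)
The plan is to mimic the decomposition strategy used in Theorem \ref{Dra_Nil} and Theorem \ref{Drazin_main}, splitting $A$ into its invertible-plus-nilpotent parts and then transferring each hypothesis to the corresponding block. Since $A\in\mathcal{B}(X)^D$, I would first write $X=\mathcal{R}(AA^D)\oplus\mathcal{N}(AA^D)$ and $A=A_1\oplus A_2$ with $A_1$ invertible and $A_2$ nilpotent, and then represent $B$ in the same coordinates as a $2\times 2$ block matrix $B=\bigl[\begin{smallmatrix}B_1 & B_3 \\ B_4 & B_2\end{smallmatrix}\bigr]$.

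The first (and main) technical step is to promote $B$ to a block-diagonal form. Expanding $AA^D(A^2B-BA^2)=0$ in blocks yields, among others, the Sylvester-type relation $A_1^2B_3=B_3A_2^2$; iterating gives $A_1^{2k}B_3=B_3A_2^{2k}=0$ for $k=i(A_2)$, and then invertibility of $A_1$ forces $B_3=0$. The symmetric condition $(A^2B-BA^2)AA^D=0$ produces $A_2^2B_4=B_4A_1^2$, and the same iteration argument gives $B_4=0$. Thus $B=B_1\oplus B_2$, and since $B$ is nilpotent both summands $B_1,B_2$ are nilpotent. As a byproduct, the $(1,1)$-block of either condition records exactly $A_1^2B_1=B_1A_1^2$.

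Next, I would transfer the $B$-weak commutativity. If $AB=BC'$ and $BA=C'B$ for some $C'\in\mathcal{B}(X)$, then writing $C'$ in blocks and using the block-diagonal form of $B$, a direct computation of the $(1,1)$- and $(2,2)$-entries shows that $A_1,B_1$ are $B_1$-weakly commutative and $A_2,B_2$ are $B_2$-weakly commutative (the off-diagonal identities collapse because of the zero blocks of $B$). Then I would use the hypotheses $A^{\pi}(AB-CA)=0$ and $A^{\pi}(BA-AC)=0$: writing $C$ in blocks, the $(2,1)$-entries give $C_4A_1=0=A_2C_4$, so $C_4=0$ by invertibility of $A_1$, while the $(2,2)$-entries yield $A_2B_2=C_2A_2$ and $B_2A_2=A_2C_2$. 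This says exactly that $A_2,B_2$ are $A_2$-weakly commutative, hence $A_2,B_2$ are $\{A_2,B_2\}$-weakly commutative.

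Finally, I would assemble the three pieces. Applying Lemma \ref{nil} to the $\{A_2,B_2\}$-weakly commutative nilpotent pair produces $A_2+B_2\in\mathcal{B}(\mathcal{N}(AA^D))^{nil}$, and applying Lemma \ref{new_inv} to the pair $(A_1,B_1)$ (invertible plus nilpotent, $B_1$-weakly commutative, with $A_1^2B_1=B_1A_1^2$ secured in the first step) gives $A_1+B_1\in\mathcal{B}(\mathcal{R}(AA^D))^{inv}$. Theorem \ref{triangular} then yields $A+B=(A_1+B_1)\oplus(A_2+B_2)\in\mathcal{B}(X)^D$. I expect the main obstacle to be the first step: making sure that the two spectral-projector conditions on $A^2B-BA^2$ really do kill both off-diagonal blocks of $B$, which is where the invertible/nilpotent spectral separation of $A_1$ and $A_2$ must be used carefully.
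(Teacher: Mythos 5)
Your proposal is correct and follows essentially the same route as the paper: decompose along $X=\mathcal{R}(AA^D)\oplus\mathcal{N}(AA^D)$, show $B$ is block-diagonal with nilpotent diagonal blocks, transfer the weak-commutativity hypotheses to each block, and conclude via Lemma \ref{nil}, Lemma \ref{new_inv} and Theorem \ref{triangular}. Your explicit Sylvester-type argument ($A_1^{2k}B_3=B_3A_2^{2k}=0$, hence $B_3=0$, and symmetrically $B_4=0$) correctly supplies a detail the paper only asserts.
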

\noindent DS Djordjević and Y Wei \cite{djordjevic2002additive} established  that if $A,B\in\mathcal{B}(X)^d$ and $AB=0$ then $A+B\in\mathcal{B}(X)^d.$ This same conclusion holds in situations involving the Drazin inverse also. Consequently, utilizing this finding leads us to the following theorem. 
\begin{theorem}
    Let $A,B\in\mathcal{B}(X)^D$ such as $A,B$ are $A$-weakly commutative, and $A^{\pi}A,A^{\pi}B$ are $A^{\pi}B$-weakly commutative. If $AA^DAB=0$, then $A+B\in\mathcal{B}(X)^D.$
    \begin{proof}
     Similar to the proof of Theorem \ref{Dra_Nil}, we have $$A=A_1\oplus A_2\text{ and }B=B_1\oplus B_2,$$ where $A_1\in\mathcal{B}(\mathcal{R}(AA^D))^{inv}$, $B_1\in \mathcal{B}(\mathcal{R}(AA^D))^D$, $A_2\in\mathcal{B}(\mathcal{N}(AA^D))^{nil}$ and $B_2\in\mathcal{B}(\mathcal{N}(AA^D))^D.$ It follows from the $A$-weakly commutativity of $A,B$ that $A_1,B_1$ are $A_1$-weakly commutative and $A_2,B_2$ are $A_2$-weakly commutative. Furthermore, from $A^{\pi}B$-weakly commutativity of $A^{\pi}A,A^{\pi}B$ we obtain $A_2,B_2$ are $B_2$-weakly commutative. Therefore using Theorem \ref{Dra_Nil}, we get $A_2+B_2\in\mathcal{B}(\mathcal{N}(AA^D))^D.$ Moreover $AA^DAB=0$ implies $A_1B_1=0.$ Therefore $A_1+B_1\in\mathcal{B}(\mathcal{R}(AA^D))^D.$ Hence using Theorem \ref{triangular}, we get
         $A+B=(A_1+B_1)\oplus (A_2+B_2)\in\mathcal{B}(X)^D$.
    \end{proof}
\end{theorem}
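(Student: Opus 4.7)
The plan is to mirror the block-decomposition strategy already used in the proof of Theorem \ref{Dra_Nil}. Since $A\in\mathcal{B}(X)^D$, I would decompose $X=\mathcal{R}(AA^D)\oplus\mathcal{N}(AA^D)$ and write $A=A_1\oplus A_2$ with $A_1$ invertible on $\mathcal{R}(AA^D)$ and $A_2$ nilpotent on $\mathcal{N}(AA^D)$. The first step is to show that $B$ respects this decomposition: from $A$-weak commutativity there is some $C\in\mathcal{B}(X)$ with $AB=CA$ and $BA=AC$, whence both summands $\mathcal{R}(AA^D)$ and $\mathcal{N}(AA^D)$ are invariant under $B$ (and $C$), giving block forms $B=B_1\oplus B_2$ and $C=C_1\oplus C_2$. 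Theorem \ref{triangular}(ii) then forces $B_1$ and $B_2$ to be Drazin invertible on their respective summands.

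Next, I would translate each hypothesis into block language. The $A$-weak commutativity descends componentwise to give that $A_i,B_i$ are $A_i$-weakly commutative for $i=1,2$ (via the blocks of $C$). Since $AA^D$ is the projection $I_1\oplus 0$ and $A^{\pi}=I-AA^D$ is the projection $0\oplus I_2$, the hypothesis $AA^DAB=0$ becomes $A_1B_1=0$, while $A^{\pi}A=0\oplus A_2$ and $A^{\pi}B=0\oplus B_2$. Thus the $A^{\pi}B$-weak commutativity of $A^{\pi}A$ and $A^{\pi}B$, restricted to $\mathcal{N}(AA^D)$, supplies an operator witnessing the $B_2$-weak commutativity of $A_2,B_2$. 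Combined with the previous step, $A_2,B_2$ are $\{A_2,B_2\}$-weakly commutative.

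Assembling: on $\mathcal{R}(AA^D)$ one has $A_1$ invertible, $B_1\in\mathcal{B}(\mathcal{R}(AA^D))^D$, and $A_1B_1=0$, so the Djordjević--Wei result cited just above the theorem yields $A_1+B_1\in\mathcal{B}(\mathcal{R}(AA^D))^D$. On $\mathcal{N}(AA^D)$ one has $A_2$ nilpotent, $B_2$ Drazin invertible, and $\{A_2,B_2\}$-weak commutativity; swapping the roles of the two arguments in Theorem \ref{Dra_Nil} (the hypothesis being symmetric) gives $A_2+B_2\in\mathcal{B}(\mathcal{N}(AA^D))^D$. Finally, Theorem \ref{triangular}(i) delivers $A+B=(A_1+B_1)\oplus(A_2+B_2)\in\mathcal{B}(X)^D$.

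The main obstacle I anticipate is the bookkeeping step of checking that the second hypothesis decouples cleanly. Block-reducibility of the witness $C$ from $AB=CA,\ BA=AC$ is straightforward (the relations force $C$ to preserve $\mathcal{R}(A^k)$ and $\mathcal{N}(A^k)$), but for the $A^{\pi}B$-weak commutativity I have to verify that the witnessing operator can be arranged so that its $\mathcal{N}(AA^D)$-block really witnesses $B_2$-weak commutativity of $A_2,B_2$; this amounts to computing $(A^{\pi}A)(A^{\pi}B)$ and $(A^{\pi}B)(A^{\pi}A)$ in block form and reading off the nontrivial component. Everything else is a direct application of results already available in the excerpt.
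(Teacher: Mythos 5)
Your proposal is correct and follows essentially the same route as the paper: decompose along $X=\mathcal{R}(AA^D)\oplus\mathcal{N}(AA^D)$, descend both weak-commutativity hypotheses to the blocks, apply the Djordjevi\'c--Wei result to $A_1+B_1$ (using $A_1B_1=0$) and Theorem \ref{Dra_Nil} to $A_2+B_2$, then assemble via Theorem \ref{triangular}. Your explicit remarks about swapping the roles of the two arguments in Theorem \ref{Dra_Nil} and about verifying that the witness for the $A^{\pi}B$-weak commutativity restricts correctly to $\mathcal{N}(AA^D)$ are details the paper passes over silently, but they are checked exactly as you describe.
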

\section{Multiplicative properties}\label{multiplication}
In this section, we investigate the Drazin invertibility of $AB$ for a pair of Drazin invertible operators $A\text{ and }B$, where $A,\text{ }B$ are $\{A,B\}$-weakly commutative. We start by presenting some results that hold significance for the main theorem of this section.
\begin{theorem}\label{idemp}
If $A\in\mathcal{B}(X)^D$ such that  $AB=CA$ and $BA=AC$ for some $B,C\in\mathcal{B}(X)$. Then
\begin{enumerate}[label=(\roman*)]
    \item $AA^DB=BAA^D;$\label{1111}
    \item $AA^DC=CAA^D;$
    \item $A^DB=CA^D;$\label{3}
    \item $A^DC=BA^D.$
\end{enumerate}
\begin{proof}
\begin{enumerate}[label=(\roman*)]
    \item 
Let $A\in \mathcal{B}(X)^D$, then $A^k(I-AA^D)=0$ for $k\geq i(A).$ Since $AB=CA$ and $BA=AC$ implies $A^2B=BA^2$,  then we have
\begin{align*}
    AA^DB-AA^DBAA^D&=AA^DB(I-AA^D)\\&=
(A^D)^{2k}A^{2k}B(I-AA^D)\\&=(A^D)^{2k}
BA^{2k}(
I-AA^D)=0.
\end{align*}
Therefore $AA^DB=AA^DBAA^D,$ similarly we get $BAA^D=AA^DBAA^D.$ Hence $AA^DB=BAA^D.$
\item Since $AB=CA$ and $BA=AC$ implies $A^2C=CA^2.$ Therefore this is similar to \ref{1111}.

\item Now
\begin{align*}
    A^DB=(A^D)^2AB&=A^DBAA^D\\&=A^DACA^D\\&=CA^D.
\end{align*}
\item Follows similarly as \ref{3}.
\end{enumerate}
\end{proof}
\end{theorem}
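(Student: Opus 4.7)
The plan is to leverage the fact that the hypotheses $AB=CA$ and $BA=AC$ immediately yield $A^2B = A(AB) = A(CA) = (AC)A = (BA)A = BA^2$, and symmetrically $A^2C = CA^2$. Thus $A^2$ commutes with both $B$ and $C$, which is exactly the commutation needed to push large even powers of $A$ through $B$ or $C$ in the Drazin calculus. The two auxiliary identities I will repeatedly use are $AA^D = (A^D)^n A^n$ for every $n\ge 1$ (a short induction from $A^D A = AA^D$ and $A^D A A^D = A^D$) and the nilpotency relation $A^k(I-AA^D)=0$ whenever $k\ge i(A)$.

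For (i), I would use idempotence of $AA^D$ to split
\[ AA^D B - AA^D B\, AA^D \;=\; AA^D B\,(I-AA^D). \]
Rewriting the leading $AA^D$ as $(A^D)^{2k}A^{2k}$ for some $k$ with $2k\ge i(A)$, commuting $A^{2k}$ past $B$ via $A^2B=BA^2$, and then applying $A^{2k}(I-AA^D)=0$ collapses the right-hand side to $0$. The symmetric manipulation $(I-AA^D) B\, AA^D = 0$ shows that $B\,AA^D$ also equals the common value $AA^D B\, AA^D$, giving $AA^D B = B\,AA^D$. Part (ii) is the identical argument with $C$ replacing $B$, using $A^2C=CA^2$.

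Parts (iii) and (iv) are then purely algebraic consequences of (i) and (ii). For (iii), starting from the defining identity $A^D = A^D\cdot AA^D$, I would compute
\[ A^D B = A^D(AA^D)B = A^D(B\,AA^D) = A^D(BA)A^D = A^D(AC)A^D = (AA^D)\,C A^D = C\,(AA^D\cdot A^D) = CA^D, \]
invoking (i) at the second equality, the hypothesis $BA=AC$ at the fourth, $A^D A = AA^D$ at the fifth, (ii) at the sixth, and finally $AA^D\cdot A^D = A^D$ (immediate from $A^D AA^D = A^D$). Part (iv) follows by the same chain with the roles of $B$ and $C$ swapped, using $AB=CA$ in place of $BA=AC$.

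The only real obstacle is bookkeeping: aligning the commutation $A^{2k}B=BA^{2k}$ with the nilpotency window $A^{2k}A^\pi=0$, and choosing the exponent in $(A^D)^n A^n = AA^D$ large enough for both identities to apply simultaneously. There is no deeper difficulty: once $A^2$-commutativity of $B$ and $C$ is in hand, the nilpotent/invertible splitting of $A$ together with the idempotence of $AA^D$ forces all four identities.
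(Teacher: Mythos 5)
Your proposal is correct and follows essentially the same route as the paper: for (i) and (ii) you split off $AA^DB(I-AA^D)$, rewrite $AA^D=(A^D)^{2k}A^{2k}$, commute $A^{2k}$ past $B$ (or $C$) using $A^2B=BA^2$ (resp. $A^2C=CA^2$), and kill the result with $A^{2k}(I-AA^D)=0$; for (iii) and (iv) you run the same algebraic chain through $BA=AC$ and the already-established parts (i)--(ii). No substantive difference from the paper's argument.
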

\noindent The subsequent corollary follows from Theorem \ref{idemp}.
\begin{corollary}\label{block_coro}
    Let $A,B\in\mathcal{B}(X)$ are $A$-weakly commutative operators and $A$ is Drazin invertible then $AA^DB=BAA^D.$
\end{corollary}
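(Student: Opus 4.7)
The plan is to derive this corollary as an immediate consequence of Theorem \ref{idemp}(i), since the hypotheses of the corollary are designed to match exactly the setup of that theorem after unpacking the definition of $A$-weakly commutativity.

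First, I would unpack the hypothesis. By Definition \ref{defn_weak}, the assumption that $A,B\in\mathcal{B}(X)$ are $A$-weakly commutative means there exists an operator $C\in\mathcal{B}(X)$ satisfying $AB=CA$ and $BA=AC$. Together with the standing assumption that $A\in\mathcal{B}(X)^D$, this is precisely the hypothesis of Theorem \ref{idemp}.

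Next, I would simply invoke Theorem \ref{idemp}(i), which under these exact hypotheses yields $AA^DB=BAA^D$. No separate argument is needed; the corollary is just a renaming of the conclusion of part (i) in the language of $A$-weakly commutative pairs.

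Since this is a direct specialization of an already-proved theorem, there is no genuine obstacle. The only conceptual point worth noting is that the operator $C$ produced by the definition is used only to witness the hypothesis of Theorem \ref{idemp}; it does not appear in the conclusion. This reflects the role that Corollary \ref{block_coro} will play in later proofs, namely to ensure that $\mathcal{R}(AA^D)$ and $\mathcal{N}(AA^D)$ are $B$-invariant whenever $A,B$ are $A$-weakly commutative, and thus to justify the block decompositions used in Theorem \ref{Dra_Nil} and Theorem \ref{Drazin_main}.
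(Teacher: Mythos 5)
Your proposal is correct and matches the paper's own reasoning: the paper states Corollary \ref{block_coro} as an immediate consequence of Theorem \ref{idemp}, exactly as you do by unpacking the definition of $A$-weak commutativity to produce the witness $C$ and then citing part (i). Nothing further is needed.
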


\noindent Now, we are ready to look over the Drazin invertibility of  product of two Drazin invertible operators.
\begin{theorem}\label{product}
    Let $A,B\in\mathcal{B}(X)^D$. If $A,B$ are $\{A,B\}$-weakly commutative, then $AB$ is Drazin invertible and $(AB)^D=B^DA^D.$
    \begin{proof}
    Since $A,B$ are $\{A,B\}$-weakly commutative, then $$AB=C_1A,\text{ }BA=AC_1 \text{ and } AB=BC_2,\text{ }BA=C_2B,$$ for two operators $C_1,C_2\in\mathcal{B}(X).$ 
        Let $X=B^DA^D$. Then, using Corollary \ref{block_coro}, we can ascertain that $ABX=XAB$ and $XABX=X$. Moreover we have \begin{align*}
                AB-ABXAB&=AB-ABB^DA^DAB\\&=AB-A^2A^DB^DB^2\\&=A^2A^D(B-B^2B^D)+(A-A^2A^D)B.
            \end{align*}
            Since $A-A^2A^D$ is nilpotent and $$(A-A^2A^D)B=C_1(A-A^2A^D),\text{ }B(A-A^2A^D)=(A-A^2A^D)C_1$$ therefore by Lemma \ref{nil}, $(A-A^2A^D)B$ is nilpotent. Similarly since $B-B^2B^D$is nilpotent and $$
       A^2A^D(B-B^2B^D)=(B-B^2B^D)C_2AA^D,\text{ }(B-B^2B^D)A^2A^D=C_2AA^D(B-B^2B^D)$$ therefore $A^2A^D(B-B^2B^D)$ is also nilpotent. Furthermore, we have $$
       (A-A^2A^D)BA^2A^D(B-B^2B^D)=0 \text{ and }(A-A^2A^D)A^DABA(B-B^2B^D)=0.
       $$ Therefore by Lemma \ref{oldnil}, we get $AB-ABB^DA^DAB$ is nilpotent.
    \end{proof}
\end{theorem}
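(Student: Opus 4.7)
The plan is to guess $Y := B^D A^D$ as the Drazin inverse of $AB$ and verify the three conditions in (\ref{def_Drazin}). The engine throughout is that under $\{A,B\}$-weakly commutativity, Corollary \ref{block_coro} applied to $(A,B)$ and to $(B,A)$ yields $AA^D B = B AA^D$ and $BB^D A = A BB^D$; a second application of Theorem \ref{idemp}(iii), taking the auxiliary element to be the relevant spectral idempotent, propagates this to $A^D BB^D = BB^D A^D$ and $B^D AA^D = AA^D B^D$. In particular $AA^D$ and $BB^D$ commute with each other.

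The first two Drazin axioms then reduce to bookkeeping. For the reflexive identity,
\[
Y \cdot AB \cdot Y \;=\; B^D (A^D A)(B B^D) A^D \;=\; B^D \cdot B B^D \cdot A A^D \cdot A^D \;=\; B^D A^D \;=\; Y,
\]
and a parallel calculation gives $AB \cdot Y = AA^D B B^D = Y \cdot AB$.

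The substantive step is the nilpotency of $AB - (AB)^2 Y$. Using the same commutations,
\[
AB - (AB)^2 Y \;=\; AB - A^2 A^D B^2 B^D \;=\; A^2 A^D (B - B^2 B^D) \;+\; (A - A^2 A^D) B.
\]
Both summands should be nilpotent. For $(A - A^2 A^D) B$: the factor $A - A^2 A^D = A A^{\pi}$ is nilpotent with index at most $i(A)$, and the pair $(A A^{\pi}, B)$ inherits $A A^{\pi}$-weakly commutativity from the $A$-side weak commutativity of $(A,B)$ via the same mediating element $C_1$ (because $A^{\pi}$ commutes with $B$, and with $C_1$ by Theorem \ref{idemp}(ii)), so Lemma \ref{nil} delivers nilpotency. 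For $A^2 A^D (B - B^2 B^D)$: the factor $B B^{\pi}$ is nilpotent and the pair $(B B^{\pi}, A^2 A^D)$ is $B B^{\pi}$-weakly commutative through the $B$-side mediating element $C_2$, so Lemma \ref{nil} again applies. Finally, the products of the two summands in both orders vanish because $A^2 A^D \cdot (A - A^2 A^D) = 0 = (A - A^2 A^D) \cdot A^2 A^D$ and the intermediate $B$-factor can be moved across these idempotents using the weak commutativity; Lemma \ref{oldnil} then finishes the nilpotency.

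The main obstacle is precisely the nilpotency step, and it is where the full $\{A,B\}$-weakly commutative hypothesis becomes essential: the $A$-side relation $AB = C_1 A$, $BA = A C_1$ controls $(A - A^2 A^D) B$, while the $B$-side relation $AB = B C_2$, $BA = C_2 B$ is needed for $A^2 A^D (B - B^2 B^D)$. A single-sided hypothesis suffices for one summand but breaks the invocation of Lemma \ref{nil} on the other, so both sides of the weak commutativity must be in play simultaneously.
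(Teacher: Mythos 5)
Your proposal follows the paper's proof essentially verbatim: the same candidate inverse $Y=B^DA^D$, the same verification of the first two Drazin axioms via Corollary \ref{block_coro} and Theorem \ref{idemp}, the same splitting $AB-(AB)^2Y=A^2A^D(B-B^2B^D)+(A-A^2A^D)B$ with Lemma \ref{nil} applied to each summand through the mediators $C_1$ and $C_2$ respectively, and the same appeal to Lemma \ref{oldnil} at the end. The only quibble is your claim that the cross-products vanish in \emph{both} orders; Lemma \ref{oldnil} needs only one order, namely $(A-A^2A^D)B\cdot A^2A^D(B-B^2B^D)=0$, which is the one that transparently follows by sliding $B$ across $A^2A^D$ via $C_1$.
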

\begin{example}
    Let $\mathcal{B}(X)=\mathcal{M}_3(\mathbb{C})$,   $$A=\begin{bmatrix}
        0 & x & 0 \\
        x & 0 & x \\
        0 & x & 0 
    \end{bmatrix} \text{ and } B=\begin{bmatrix}
        y & 0 & 0 \\
        0 & -y & 0\\
        0 & 0 & y
    \end{bmatrix}\in M_3(\mathbb{C}), $$ where $x,y\in\mathbb{C}-\{0\}.$ Therefore $A,B$ are $\{A,B\}$-weakly commutative, and hence $(AB)^D=B^DA^D$.
    \begin{proof}
        Let $C_1=-B$ and $C_2=-A$ then we have $AB=C_1A,$ $BA=AC_1$ and $AB=BC_2$, $BA=C_2B.$ Moreover we have $$A^D=\begin{bmatrix}
        0 & \frac{1}{2x} & 0 \\
        \frac{1}{2x} & 0 & \frac{1}{2x} \\
        0 & \frac{1}{2x} & 0 
    \end{bmatrix} \text{, } B^D=\begin{bmatrix}
        \frac{1}{y} & 0 & 0 \\
        0 & -\frac{1}{y} & 0\\
        0 & 0 & \frac{1}{y}
    \end{bmatrix}\text{ and }(AB)^D=\begin{bmatrix}
        0 & \frac{1}{2xy} & 0 \\
        -\frac{1}{2xy} & 0 & -\frac{1}{2xy} \\
        0 & \frac{1}{2xy} & 0 
    \end{bmatrix}.$$ Hence we obtain $(AB)^D=B^DA^D$.
    \end{proof}
\end{example}
\begin{remark}
    The results found in \cite[Theorem 7.8.4]{campbell1991generalized} followed as a specific case of Theorem \ref{product} and Theorem \ref{idemp}.
\end{remark}
\noindent Utilizing the preceding result, we obtain the subsequent corollary. 

\begin{corollary}
    If $A,B\in\mathcal{B}(X)^D$ such as $A,B$ are $A$-weakly commutative and $AA^DA,AA^DB$ are $AA^DB$-weakly commutative. Then $AB\in\mathcal{B}(X)^D$ and $(AB)^D=B^DA^D.$ 
    \begin{proof}
        Let $A\in\mathcal{B}(X)^D$ and $A,B$ are $A$-weakly commutative therefore $A\text{ and }B$ has the block form representation $$A=A_1\oplus A_2\text{, }B=B_1\oplus B_2,$$ where $A\in\mathcal{B}(\mathcal{R}(AA^D))^{inv}$ and $A_2\in\mathcal{B}(\mathcal{N}(AA^D))^{nil}$. Moreover from $A$-weakly commutativity of $A,B$ it follows that $A_1,B_1$ are $A_1$-weakly commutative and $A_2,B_2$ are $A_2$ weakly commutative. Then, by Lemma \ref{nil}, $A_2B_2$ is nilpotent since $A_2$ is nilpotent. Furthermore from the $AA^DB$-weakly commutativity of $AA^DA$ and $AA^DB$ we obtain $B_1$-weakly commutativity of $A_1,B_1$. Therefore $A_1,B_1$ are $\{A_1,B_1\}$-weakly commutative, then by Theorem \ref{product}, $A_1B_1\in\mathcal{B}(\mathcal{R}(AA^D))$ and $(A_1B_1)^D=B_1^DA_1^D.$ Hence using Theorem \ref{triangular}, we get $$AB=A_1B_1\oplus A_2B_2\in\mathcal{B}(X)^D,\text{ and }(AB)^D=B^DA^D.$$
    \end{proof}
\end{corollary}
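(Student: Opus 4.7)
The plan is to follow the same block-decomposition strategy that has powered the preceding theorems in Section \ref{multiplication}, namely to split $A$ and $B$ along $A$'s Drazin decomposition, verify that both hypotheses descend to each block, and then invoke the already-established results on each piece separately.

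First, since $A, B$ are $A$-weakly commutative and $A \in \mathcal{B}(X)^D$, Corollary \ref{block_coro} gives $AA^D B = B AA^D$. Thus the spectral idempotent $AA^D$ commutes with $B$, and both subspaces $\mathcal{R}(AA^D)$ and $\mathcal{N}(AA^D)$ are invariant under $A$ and $B$. Along the decomposition (\ref{decom}) I write $A = A_1 \oplus A_2$ with $A_1$ invertible and $A_2$ nilpotent, and $B = B_1 \oplus B_2$ with each $B_i$ Drazin invertible by Theorem \ref{triangular}(ii).

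Next I verify that the weak-commutativity relations restrict properly. The witness $C$ for $A$-weak commutativity of $A, B$ satisfies $A^2 C = C A^2$ by symmetry with Lemma \ref{2commutative}, so the argument of Corollary \ref{block_coro} shows $AA^D C = C AA^D$ and $C = C_{11} \oplus C_{12}$ splits too; the identities $AB = CA$, $BA = AC$ then decouple into $A_i B_i = C_{1i} A_i$, $B_i A_i = A_i C_{1i}$ for $i = 1, 2$, giving $A_i$-weak commutativity of $A_i, B_i$ on each block. Similarly, restricting the hypothesis that $AA^D A, AA^D B$ are $AA^D B$-weakly commutative to $\mathcal{R}(AA^D)$ yields that $A_1, B_1$ are $B_1$-weakly commutative. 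Consequently $A_1, B_1$ are $\{A_1, B_1\}$-weakly commutative, so Theorem \ref{product} applies on the first block and yields $A_1 B_1 \in \mathcal{B}(\mathcal{R}(AA^D))^D$ with $(A_1 B_1)^D = B_1^D A_1^D$. On the second block, $A_2$ is nilpotent and $A_2, B_2$ are $A_2$-weakly commutative, so Lemma \ref{nil} immediately gives that $A_2 B_2$ is nilpotent, hence Drazin invertible with Drazin inverse $0$.

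Finally I assemble: Theorem \ref{triangular} produces $AB = (A_1 B_1) \oplus (A_2 B_2) \in \mathcal{B}(X)^D$ with $(AB)^D = (A_1 B_1)^D \oplus 0 = B_1^D A_1^{-1} \oplus 0$. A short verification that $B^D A^D = (B_1^D \oplus B_2^D)(A_1^{-1} \oplus 0) = B_1^D A_1^{-1} \oplus 0$ matches this operator then delivers the identity $(AB)^D = B^D A^D$. The step I expect to be most delicate is the careful restriction of the hypothesis on $AA^D A$ and $AA^D B$: the global witness provided there need not a priori be compatible with the block decomposition, and I must confirm that its compression to $\mathcal{R}(AA^D)$ still serves as a valid witness for $B_1$-weak commutativity of $A_1, B_1$. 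This should follow from centrality of $AA^D$ relative to the decomposition, but it is the one place where routine block computations need to be done with some attention.
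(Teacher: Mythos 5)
Your proposal is correct and follows essentially the same route as the paper: decompose $A=A_1\oplus A_2$, $B=B_1\oplus B_2$ along $\mathcal{R}(AA^D)\oplus\mathcal{N}(AA^D)$, show both weak-commutativity hypotheses descend to the blocks, apply Theorem \ref{product} on the invertible block and Lemma \ref{nil} on the nilpotent block, and assemble via Theorem \ref{triangular}. Your extra care in checking that the witness for the $AA^DB$-weak commutativity compresses correctly to $\mathcal{R}(AA^D)$, and in verifying $B^DA^D=B_1^DA_1^{-1}\oplus 0$, is a welcome refinement of details the paper leaves implicit.
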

\section{Applications}\label{application}
The aim of this section is to explore new additive and multiplicative properties of Drazin inverse for some specific types of operators by using results found in section \ref{addition} and section \ref{multiplication}.
 We begin with the subsequent lemma.
\begin{lemma}
    If $A,B,C\in\mathcal{B}(X),$ such as $AB=CA,$ $BA=AC$ and $A$ is invertible. Then $A+B\in\mathcal{B}(X)^D$ if and only if $A+C\in\mathcal{B}(X)^D$.
    \begin{proof}
        Since $A$ is invertible then $A^D=A^{-1}$ and from $AB=CA$ we obtain  $B=A^{-1}CA$. Moreover we have $$A+B=A+A^{-1}CA=(I+A^{-1}C)A.$$ Therefore by Cline’s formula \cite{cline1965application}, $(I+A^{-1}C)A\in\mathcal{B}(X)^D$ if and only if $A(I+A^{-1}C)=A+C$ is Drazin invertible. Hence, we get $A+B\in\mathcal{B}(X)^D$ if and only if $A+C\in\mathcal{B}(X)^D$. 
    \end{proof}
\end{lemma}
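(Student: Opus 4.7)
The plan is to reduce the equivalence to a product-of-factors situation and then apply Cline's formula. The starting observation is that the invertibility of $A$ turns the relation $AB=CA$ into an explicit similarity-type identity for $B$, namely $B=A^{-1}CA$. This is the move that trades the operator $B$ for the operator $C$ inside the sum $A+B$, and it is the only content of the invertibility hypothesis that I will need.

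With that substitution in hand, I would rewrite
\[
A+B \;=\; A+A^{-1}CA \;=\; (I+A^{-1}C)\,A,
\]
so that $A+B$ is displayed as a product $PQ$ with $P=I+A^{-1}C$ and $Q=A$. Swapping the factors yields $QP=A(I+A^{-1}C)=A+C$, which is precisely the other operator whose Drazin invertibility we wish to compare with $A+B$. Cline's formula \cite{cline1965application}, which asserts that $PQ$ is Drazin invertible if and only if $QP$ is (for any $P,Q\in\mathcal{B}(X)$), then delivers the claimed equivalence in a single line.

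I do not expect any genuine obstacle; the argument is essentially a one-line factorization plus an appeal to a known result. The only points worth flagging are, first, that only the relation $AB=CA$ is actually used in the proof, the companion relation $BA=AC$ being included to match the $a$-weakly commutative framework of the rest of the paper, and second, that none of the heavier $\{A,B\}$-weakly commutativity machinery developed in Sections \ref{addition} and \ref{multiplication} is needed here, because the invertibility of $A$ already supplies everything required to convert the sum into a product and back again.
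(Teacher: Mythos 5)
Your proposal is correct and follows exactly the same route as the paper: invertibility of $A$ converts $AB=CA$ into $B=A^{-1}CA$, the sum is factored as $A+B=(I+A^{-1}C)A$, and Cline's formula applied to the swapped product $A(I+A^{-1}C)=A+C$ gives the equivalence. Your remark that only the relation $AB=CA$ is actually used is also accurate.
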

By applying Theorem \ref{product}, we obtain the following result, which provides a sufficient condition for the Drazin invertibility of the sum of two involutary operators.
\begin{theorem}\label{involutary}
 Let $A,B\in\mathcal{B}(X)$ such that $A^2=B^2=I$ and $A-B=BAB-ABA$. If $I+AB\in\mathcal{B}(X)^D$ then $A+B\in\mathcal{B}(X)^D$, and $$(A+B)^D=(I+AB)^DA.$$
    \begin{proof}
       If $A^2=B^2=I$ then we have $A+B=A(I+AB).$  Moreover
           $$A(I+AB)=(I+BA)A\text{ and }
           (I+AB)A=A(I+BA).$$
        Therefore $A, I+AB$ are $A$-weakly commutative. Furthermore, we have $$A(I+AB)=(I+AB)B$$ and from $A-B=BAB-ABA$ it follows that $(I+AB)A=B(I+AB)$, hence $A, I+AB$ are $I+AB$-weakly commutative. Since $A$ is invertible, therefore, by Theorem \ref{product}, if $I+AB\in\mathcal{B}(X)^D$, then $A(I+AB)=A+B$ is Drazin invertible where \begin{align*}
           (A+B)^D=&(I+AB)^DA^D\\=&(I+AB)^DA.
       \end{align*}
    \end{proof}
\end{theorem}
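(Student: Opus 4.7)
The plan is to exploit the involutions $A^2=B^2=I$ to rewrite $A+B$ as a product of two Drazin invertible operators, then invoke Theorem \ref{product}. Specifically, since $A^2=I$, we have
\[
A+B = A + A^2B = A(I+AB),
\]
and $A$ itself is invertible (with $A^{-1}=A$), so $A\in\mathcal{B}(X)^D$ with $A^D=A$. Combined with the assumption that $I+AB\in\mathcal{B}(X)^D$, Theorem \ref{product} will yield both Drazin invertibility of $A+B$ and the formula $(A+B)^D=(I+AB)^D A^D=(I+AB)^D A$, provided we can verify that $A$ and $I+AB$ are $\{A,I+AB\}$-weakly commutative.

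The substantive step is to produce the two weak-commutativity witnesses. For the $A$-weak commutativity, I would try $C_1=I+BA$ and verify directly, using $A^2=I$, that
\[
A(I+AB)=A+B=(I+BA)A \quad \text{and} \quad (I+AB)A=A+ABA=A(I+BA).
\]
For the $(I+AB)$-weak commutativity, I would try $C_2=B$ and compute
\[
A(I+AB)=A+B, \quad (I+AB)B=B+AB^2=B+A,
\]
using $B^2=I$; and
\[
(I+AB)A=A+ABA, \quad B(I+AB)=B+BAB.
\]
The second pair matches if and only if $A+ABA=B+BAB$, i.e., $A-B=BAB-ABA$, which is precisely the hypothesis. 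Thus $C_2=B$ works exactly because of the stated identity.

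With both weak commutativities in hand, Theorem \ref{product} (applied to the Drazin invertible operators $A$ and $I+AB$) gives
\[
(A+B)^D = \bigl(A(I+AB)\bigr)^D = (I+AB)^D A^D = (I+AB)^D A,
\]
completing the proof. The only real obstacle is guessing the witnesses, and the form of the hypothesis $A-B=BAB-ABA$ is essentially a road map: it is the obstruction that the naive choice $C_2=B$ must overcome in the $(I+AB)$-weak commutativity check. Once this is recognized, the rest is routine manipulation using $A^2=B^2=I$ together with Theorem \ref{product}.
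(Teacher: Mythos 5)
Your proof is correct and takes essentially the same route as the paper: the same factorization $A+B=A(I+AB)$, the same witnesses $C_1=I+BA$ for the $A$-weak commutativity and $C_2=B$ for the $(I+AB)$-weak commutativity (with the hypothesis $A-B=BAB-ABA$ used exactly where you use it), and the same appeal to Theorem \ref{product} with $A^D=A$.
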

\newpage
Now, we give an example to illustrate the preceding theorem.
\begin{example}
    Let $T,S\in\mathcal{B}(\ell^2(\mathbb{C})),$ define by
    \begin{align*}
        T(x_1,x_2,x_3,x_4,x_5,x_6,\cdots)={}&(-x_1,x_2-10x_3,-x_3,-x_4,x_5-10x_6,-x_6,\cdots)\text{ and }\\S(x_1,x_2,x_3,x_4,x_5,x_6,\cdots)={}&(-x_1+2x_3,-x_2-10x_3,x_3,-x_4+2x_6,-x_5-10x_6,x_6,\cdots).
    \end{align*}Then one can easily verify that $T^2=I$, $S^2=I$ and $T-S=STS-TST.$ Therefore by Theorem \ref{involutary}, if $I+TS$ is Drazin invertible then $T+S$ is Drazin invertible. 
\end{example}
Next, using Lemma \ref{nil}, we prove that if in Definition \ref{defn_weak}, we restrict the choice of $B$ and $C$ to the collection of invertible elements only, then the Drazin invertibility of $AA^D(A+B)$ ensure the Drazin invertibility of $A+B$, when $A,B$ are $A$-weakly commutative.
\begin{theorem}
    Let $A\in\mathcal{B}(X)^D$ and $B,C\in\mathcal{B}(X)^{inv}$ such as $AB=CA$ and $BA=AC$. If $AA^D(A+B)\in \mathcal{B}(X)^D,$ then $A+B\in\mathcal{B}(X)^D$.
    \begin{proof}
        Since $A\in\mathcal{B}(X)^D$, therefore $A$ have the block representation $$A=A_1\oplus A_2,$$
         where $A_1\in\mathcal{B}(\mathcal{R}(AA^D))^{inv},$ and $A_2\in \mathcal{B}(\mathcal{N}(AA^D))^{nil}.$ Furthermore since $AB=CA$, $BA=AC$ then $B$ also have the block representation $$B=B_1\oplus B_2,$$ where $B_1$
        and $B_2$ are invertible operators in $\mathcal{B}(\mathcal{R}(AA^D))$ and $\mathcal{B}(\mathcal{N}(AA^D)) $, respectively. Moreover from $AA^D(A+B)\in \mathcal{B}(X)^D$ we obtain that $A_1+B_1\in\mathcal{B}(\mathcal{R}(AA^D))^D$. Now since $B,C$ are invertible, therefore $AB=CA$ and $BA=AC$ implies $$AB^{-1}=C^{-1}A\text{ and }B^{-1}A=AC^{-1}.$$ Hence $A,B^{-1}$ are $A$-weakly commutative, moreover $A_2,B_2^{-1}$ are also $A_2$-weakly commutative. Now since $A_2\in \mathcal{B}(\mathcal{N}(AA^D))^{nil},$ then by Lemma \ref{nil}, $A_2B_2^{-1}\in \mathcal{B}(\mathcal{N}(AA^D))^{nil}$. Therefore $I+A_2B_2^{-1}\in\mathcal{B}(\mathcal{N}(AA^D))^{inv},$ hence $A_2+B_2=(A_2B_2^{-1}+I)B_2\in\mathcal{B}(\mathcal{N}(AA^D))^{inv}.$ Therefore by Theorem \ref{triangular}, $A+B\in\mathcal{B}(X)^D,$ since $A_1+B_1\in\mathcal{B}(\mathcal{R}(AA^D))^D$ and $A_2+B_2\in\mathcal{B}(\mathcal{N}(AA^D))^{inv}.$ 
         \end{proof}
\end{theorem}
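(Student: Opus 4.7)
The plan is to exploit the Drazin decomposition of $A$ to split $A+B$ as a direct sum of two operators, one on $\mathcal{R}(AA^D)$ that is Drazin invertible by hypothesis, and one on $\mathcal{N}(AA^D)$ that I want to show is actually invertible. The invertibility of $B$ and $C$ is what will make the second piece tractable.

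First, I would write $A=A_1\oplus A_2$ with respect to $X=\mathcal{R}(AA^D)\oplus \mathcal{N}(AA^D)$, where $A_1$ is invertible on $\mathcal{R}(AA^D)$ and $A_2$ is nilpotent on $\mathcal{N}(AA^D)$. Next I need $B$ and $C$ to respect this decomposition. From $AB=CA$ and $BA=AC$ one immediately obtains $A^2B=BA^2$ (exactly as in Lemma \ref{2commutative}), and Theorem \ref{idemp} then yields $AA^DB=BAA^D$ and $AA^DC=CAA^D$. Thus $B=B_1\oplus B_2$ and $C=C_1\oplus C_2$ relative to the same decomposition, and since $B,C$ are invertible on $X$, each block is invertible on its respective subspace. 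Under this splitting the hypothesis $AA^D(A+B)\in\mathcal{B}(X)^D$ reads $(A_1+B_1)\oplus 0\in\mathcal{B}(X)^D$, so by Theorem \ref{triangular}(ii) $A_1+B_1$ is Drazin invertible on $\mathcal{R}(AA^D)$.

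The crux is showing that $A_2+B_2$ is invertible on $\mathcal{N}(AA^D)$. Here I would use the invertibility of $B$ and $C$ to rewrite the weak commutativity: from $AB=CA$ one gets $AB^{-1}=C^{-1}A$, and from $BA=AC$ one gets $B^{-1}A=AC^{-1}$. Hence $A$ and $B^{-1}$ are $A$-weakly commutative, and restricting to $\mathcal{N}(AA^D)$, $A_2$ and $B_2^{-1}$ are $A_2$-weakly commutative. Since $A_2$ is nilpotent, Lemma \ref{nil} applies and gives that $A_2B_2^{-1}$ is nilpotent. Therefore $I+A_2B_2^{-1}$ is invertible, and the identity $A_2+B_2=(I+A_2B_2^{-1})B_2$ exhibits $A_2+B_2$ as a product of two invertible operators, hence invertible. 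Assembling the pieces, $A+B=(A_1+B_1)\oplus(A_2+B_2)$ is Drazin invertible by Theorem \ref{triangular}(i), since the direct sum of a Drazin invertible operator and an invertible operator is Drazin invertible.

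The main obstacle I anticipate is the step that isolates the role of the invertibility hypothesis on $B$ and $C$. Without it, there is no obvious way to turn $A_2$-nilpotence into invertibility of $A_2+B_2$; the slick observation is that invertibility lets the relations be flipped so that $B^{-1}$ again satisfies a weak commutativity with $A$, and Lemma \ref{nil} applied to the pair $(A_2,B_2^{-1})$ provides the nilpotency needed to conclude invertibility of $I+A_2B_2^{-1}$. Once this is in place, everything else is routine direct-sum bookkeeping via Theorem \ref{triangular} and Theorem \ref{idemp}.
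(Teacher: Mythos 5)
Your proposal is correct and follows essentially the same route as the paper's own proof: the same Drazin decomposition of $A$, the same flip of the relations to get $A$-weak commutativity of $A$ and $B^{-1}$, the same use of Lemma \ref{nil} to make $A_2B_2^{-1}$ nilpotent and hence $A_2+B_2=(I+A_2B_2^{-1})B_2$ invertible, and the same assembly via Theorem \ref{triangular}. The only (harmless) addition is that you explicitly invoke Theorem \ref{idemp} to justify that $B$ and $C$ respect the decomposition, which the paper leaves implicit.
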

\begin{proposition}
 If $A\in(\mathcal{B}(X))^D,$ such that $AB=CA$ and $BA=AC$ for two commuting operators $B,\text{}C\in\mathcal{B}(X)^{inv}$, then $A^2B=ACA$ and $ABA=A^2C$ are Drazin invertible.
    \begin{proof}
        Let $M=AB=CA$ and $N=BA=AC$, then by \cite[Proposition 2.5.]{barraa2019drazin} both $M\text{ and }N$ are Drazin invertible. Furthermore we have \begin{equation*}
             AM=(AC)A=M(B^{-1}M) \text{ and } MA=A(BA)=(MC^{-1})M,
        \end{equation*} where $B^{-1}M=MC^{-1}$ since $BC=CB.$ Therefore $A,M$ are $\{A,M\}$-weakly commutative. Similarly, $A,N$ are also  $\{A,N\}$-weakly commutative. Therefore, in light of Theorem \ref{product}, we obtain both $AN\text{ and }AM$ are Drazin invertible, as desired.    
    \end{proof}
\end{proposition}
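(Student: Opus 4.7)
The plan is to set $M := AB = CA$ and $N := BA = AC$, so that $AM = A^2B = ACA$ and $AN = ABA = A^2C$; these identifications are immediate from writing $A\cdot AB = A\cdot CA$ and $A\cdot BA = A\cdot AC$. It then suffices to prove $AM, AN \in \mathcal{B}(X)^D$. The strategy is to apply Theorem \ref{product} to the pairs $A, M$ and $A, N$, once we know that $M$ and $N$ are themselves Drazin invertible and that each pair is $\{A,M\}$- (resp. $\{A,N\}$-) weakly commutative.

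The Drazin invertibility of $M$ and $N$ follows directly from \cite[Proposition 2.5]{barraa2019drazin}, using that $B, C$ are commuting invertibles and $A$ is Drazin invertible. The substantive work is producing the intertwiners required by Definition \ref{defn_weak}. For the $A$-weak commutativity of $A$ and $M$, take $c_1 := AC$; then $c_1 A = A(CA) = A\cdot AB = AM$ and $Ac_1 = A^2C = (AB)A = MA$. For the $M$-weak commutativity of $A$ and $M$, take $c_2 := B^{-1}AB$ — the invertibility of $B$ enters here. One computes $Mc_2 = AB\cdot B^{-1}AB = A\cdot AB = ACA = AM$, while $c_2 M = B^{-1}A(BA)B = B^{-1}A^2 C B$. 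To identify this with $MA = A^2C$ we rewrite
\[
A^2 C B = A^2 B C = B A^2 C,
\]
where the first equality uses $BC = CB$ and the second uses Lemma \ref{2commutative}, since $A, B$ are $A$-weakly commutative (with intertwiner $C$) and hence $A^2 B = B A^2$. This gives $c_2 M = B^{-1}\cdot BA^2C = A^2 C = MA$, as needed.

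The argument for the pair $A, N$ is entirely symmetric: $d_1 := AB = M$ witnesses the $A$-weak commutativity, and $d_2 := C^{-1}AC$ (requiring invertibility of $C$) witnesses the $N$-weak commutativity; the analogue of $A^2B = BA^2$ is $A^2C = CA^2$, obtained from $A^2 C = A\cdot AC = A\cdot BA = (AB)A = CA\cdot A = CA^2$. An application of Theorem \ref{product} to each pair then concludes $AM, AN \in \mathcal{B}(X)^D$. The step I expect to be the main obstacle is the verification of the $M$-weak (respectively $N$-weak) commutativity: the choice $c_2 = B^{-1}AB$ is not visible at the surface of the hypotheses, and its verification depends on combining invertibility of $B$, commutativity $BC = CB$, and the squared-commutator identity from Lemma \ref{2commutative} in a single chain — everything else in the proof is bookkeeping.
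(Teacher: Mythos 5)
Your proof is correct and follows essentially the same route as the paper: you introduce $M=AB=CA$ and $N=BA=AC$, get their Drazin invertibility from \cite[Proposition 2.5]{barraa2019drazin}, verify $\{A,M\}$- and $\{A,N\}$-weak commutativity, and conclude via Theorem \ref{product}. Your intertwiners coincide with the paper's (your $c_2=B^{-1}AB$ is exactly the paper's $B^{-1}M$, and your use of $BC=CB$ together with $A^2B=BA^2$ is the computation the paper compresses into the identity $B^{-1}M=MC^{-1}$), so the only difference is that your verification is spelled out in more detail.
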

\section{For g-Drazin inverse in a Banach algebra}\label{Banach alg}
In this section, we extend the scope of certain additive and multiplicative results presented in Section \ref{addition} and \ref{multiplication} to the case of g-Drazin inverse within a complex Banach algebra.
Let $\mathcal{A}$ be an unital complex Banach algebra with the unit $1$. If $p\in\mathcal{A}$ is an idempotent, then each $x\in\mathcal{A}$ has a unique matrix representation$$\begin{bmatrix}
    x_{11} & x_{12}\\
    x_{21} & x_{22}\\
\end{bmatrix}_p, \text{ where }x_{ij}=p_ixp_j,\text{ }p_1=p,\text{ }p_2=1-p.$$ Now, we begin with the following result presented in \cite{cvetkovic2006additive}. 
\begin{lemma}\cite{cvetkovic2006additive}\label{basicqnil}
    If $a,b\in\mathcal{A}^{qnil}$ such as $ab=0$ or $ab=ba$, then $a+b\in\mathcal{A}^{qnil}.$
\end{lemma}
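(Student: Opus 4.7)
The plan is to use the spectral-radius characterization of quasinilpotent elements: $x\in\mathcal{A}^{qnil}$ if and only if $\lim_{n\to\infty}\|x^n\|^{1/n}=0$. I would then split the proof according to the two alternative hypotheses $ab=ba$ and $ab=0$.

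In the commuting case $ab=ba$, I would invoke the standard Banach-algebra inequality $r(a+b)\leq r(a)+r(b)$, which holds for any two commuting elements (it is a consequence of Gelfand theory applied to the closed commutative subalgebra generated by $a$ and $b$, in which the spectral radius is submultiplicative on products and subadditive on sums). Since $r(a)=r(b)=0$ by assumption, this forces $r(a+b)=0$, giving $a+b\in\mathcal{A}^{qnil}$ at once.

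In the orthogonality case $ab=0$, I would expand $(a+b)^n$ by the non-commutative multinomial formula: it is the sum over all words $x_1x_2\cdots x_n$ with $x_i\in\{a,b\}$. The key observation is that any word containing the subword $ab$ vanishes under the hypothesis $ab=0$, so the only surviving monomials are those of the form $b^{k}a^{n-k}$ with $0\leq k\leq n$. Hence
\begin{equation*}
(a+b)^n=\sum_{k=0}^{n} b^{k}a^{n-k}.
\end{equation*}
Now fix $\epsilon>0$. Because $a,b\in\mathcal{A}^{qnil}$, there exists a constant $C>0$ with $\|a^k\|\leq C\epsilon^k$ and $\|b^k\|\leq C\epsilon^k$ for every $k\geq 0$. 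Consequently
\begin{equation*}
\|(a+b)^n\|\leq\sum_{k=0}^{n}\|b^{k}\|\,\|a^{n-k}\|\leq (n+1)C^{2}\epsilon^{n},
\end{equation*}
so $\|(a+b)^n\|^{1/n}\to\epsilon$, and letting $\epsilon\downarrow 0$ yields $r(a+b)=0$, i.e.\ $a+b\in\mathcal{A}^{qnil}$.

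The main (mild) obstacle is the non-commutative expansion in the second case: one must see that the pattern $ab$ cannot appear in any surviving word, which forces the surviving monomials to be exactly $b^{k}a^{n-k}$. Once this combinatorial observation is made, the rest is a routine estimate using uniform bounds on $\|a^k\|$ and $\|b^k\|$ afforded by quasinilpotency.
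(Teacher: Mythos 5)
Your proposal is correct, but there is nothing in the paper to compare it against: the paper states this lemma with a citation to Cvetkovi\'c-Ili\'c and Djordjevi\'c and gives no proof of its own, so your argument stands as a self-contained replacement for that reference. Both halves of your proof are sound. In the commuting case, the subadditivity $r(a+b)\leq r(a)+r(b)$ for commuting elements is the standard route (and the usual technical caveat about spectra relative to subalgebras is harmless here, since the spectral radius is given intrinsically by $\lim_n\|x^n\|^{1/n}$). In the case $ab=0$, your combinatorial observation is right: a word in $a,b$ survives precisely when no $a$ is immediately followed by a $b$, which forces the word to be $b^k a^{n-k}$, giving $(a+b)^n=\sum_{k=0}^{n}b^k a^{n-k}$, and the estimate $\|(a+b)^n\|\leq (n+1)C^2\epsilon^n$ then finishes the job. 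One cosmetic slip: from that bound you should conclude $\limsup_n\|(a+b)^n\|^{1/n}\leq\epsilon$ rather than that the sequence converges to $\epsilon$; letting $\epsilon\downarrow 0$ then gives $r(a+b)=0$ as you intend. It is also worth noting that your second case is the one actually used later in the paper (in the proof of Theorem \ref{product} via its Drazin analogue Lemma \ref{oldnil}), so having the explicit word-counting argument on record is genuinely useful.
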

\begin{lemma}\label{bcommu}
  If $a,b\in\mathcal{A}$ are $a$-weakly commutative such that $a\in\mathcal{A}^{qnil}$ then $ab\in\mathcal{A}^{qnil}.$ Moreover, if $a,b$ are $b$-weakly commutative and $b\in\mathcal{A}^{qnil}$, then $a+b\in\mathcal{A}^{qnil}.$
  \begin{proof}
      Since $a,b$ are $a$-weakly commutative then $ab=c_1a\text{ and }ba=ac_1$ for some $c_1\in\mathcal{A}.$ Let $\alpha=\text{max}\{1,\|a\|+\|b\|+\|c_1\|\},$ and take $a_1=\frac{a}{\alpha},\text{ }b_1=\frac{b}{\alpha}\text{ and }c_1'=\frac{c_1}{\alpha},$ then max$\{\|a_1\|,\|b_1\|,\|c_1'\|\}\leq 1$, furthermore $ab\in\mathcal{A}^{qnil}$ if and only if $a_1b_1\in\mathcal{A}^{qnil}.$ Thus without loss of generality we can assume that max$\{\|a\|,\|b\|,\|c_1\|\}\leq 1$. Now for $n\in\mathbb{N}$ we have
     \begin{align*}
        (ab)^n&=\begin{cases}
          a^nbc_1b\cdots b, \text{ if $n$ is odd,}\\a^nc_1bc_1\cdots b, \text{ if $n$ is even}.  
        \end{cases} 
    \end{align*}
   Therefore $$\lim_{n\to\infty}\|(ab)^n\|^{\frac{1}{n}}\leq\lim_{n\to\infty}\|a^n\|^{\frac{1}{n}}=0,$$ hence we get $ab\in\mathcal{A}^{qnil}.$  In order to prove $a+b\in\mathcal{A}^{qnil}$, its enough to prove that $(a+b)^2\in\mathcal{A}^{qnil}.$ Now $$(a+b)^2=a^2+ab+ba+b^2,$$ where $a^2,b^2,ab,ba\in\mathcal{A}^{qnil}$, since $b\in\mathcal{A}^{qnil}$. Moreover, if $a,b$ are $\{a,b\}$-weakly commutative, then by Lemma \ref{2commutative}, $a^2,ab,ba,b^2$ all commutes with each other. Therefore in light of Lemma \ref{basicqnil}, we obtain $(a+b)^2\in\mathcal{A}^{qnil}.$
  \end{proof}
\end{lemma}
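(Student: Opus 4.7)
The plan is to handle the two assertions in turn, with the first feeding into the second.

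For the quasinilpotency of $ab$, the $a$-weak commutativity supplies some $c_1 \in \mathcal{A}$ with $ab = c_1 a$ and $ba = a c_1$. I will push all copies of $a$ in $(ab)^n$ to the leftmost position by a short induction: $(ab)^2 = a(ba)b = a^2 c_1 b$, and iterating gives
\[ (ab)^n = a^n\, w_n(b, c_1), \]
where $w_n$ is a word of length $n$ in the letters $b$ and $c_1$ whose precise shape alternates with the parity of $n$. To convert this into a norm bound I will first rescale: replacing $a, b, c_1$ by $a/\alpha, b/\alpha, c_1/\alpha$ with $\alpha := \max\{1, \|a\|+\|b\|+\|c_1\|\}$ preserves both the hypothesis $a \in \mathcal{A}^{qnil}$ and the desired conclusion $ab \in \mathcal{A}^{qnil}$ (quasinilpotence is invariant under nonzero scalar multiples), while forcing all three norms to be at most $1$. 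Then $\|(ab)^n\| \leq \|a^n\|$, and taking $n$th roots together with $a \in \mathcal{A}^{qnil}$ yields $\lim_n \|(ab)^n\|^{1/n} = 0$.

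For the additive assertion, I will use that $r((a+b)^2) = r(a+b)^2$, so $a+b \in \mathcal{A}^{qnil}$ is equivalent to $(a+b)^2 \in \mathcal{A}^{qnil}$. Expanding
\[ (a+b)^2 = a^2 + ab + ba + b^2, \]
I want to iterate Lemma \ref{basicqnil}, which needs each summand to lie in $\mathcal{A}^{qnil}$ and the four summands to commute pairwise. Quasinilpotence of the summands: $b^2 \in \mathcal{A}^{qnil}$ since $b$ is, and $ab, ba \in \mathcal{A}^{qnil}$ by applying the first assertion with the roles of $a$ and $b$ swapped (using the $b$-weak commutativity of $a,b$ and $b \in \mathcal{A}^{qnil}$). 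Pairwise commutativity: Lemma \ref{2commutative}, applied with both weak-commutativity hypotheses in force, gives the building blocks $a^2 b = b a^2$ and $a b^2 = b^2 a$, from which one reads off that $a^2, ab, ba, b^2$ commute pairwise by a short direct check. Two applications of Lemma \ref{basicqnil} — first pair the four summands into two commuting quasinilpotent partial sums, then combine — deliver $(a+b)^2 \in \mathcal{A}^{qnil}$, and hence $a+b \in \mathcal{A}^{qnil}$.

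The main obstacle I anticipate is the bookkeeping behind the identity $(ab)^n = a^n w_n(b, c_1)$: verifying it needs \emph{both} relations $ab = c_1 a$ and $ba = a c_1$ used in alternation, and a careless induction step can reintroduce an $a$-factor into the tail word and spoil the norm comparison. Once this is handled cleanly, the remaining steps are routine applications of the closure properties of $\mathcal{A}^{qnil}$ already recorded in Lemma \ref{basicqnil} and of Lemma \ref{2commutative}.
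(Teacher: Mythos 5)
Your proposal is correct and follows essentially the same route as the paper: push all copies of $a$ to the left in $(ab)^n$ via the two relations $ab=c_1a$, $ba=ac_1$, rescale so the norms are at most $1$ to get $\|(ab)^n\|^{1/n}\le\|a^n\|^{1/n}\to 0$, and for the sum reduce to $(a+b)^2=a^2+ab+ba+b^2$, whose summands are quasinilpotent and pairwise commute by Lemma \ref{2commutative}, so Lemma \ref{basicqnil} applies. The only (shared, harmless) gap is that quasinilpotence of $a^2$ requires reading the "moreover" clause cumulatively, i.e.\ with $a\in\mathcal{A}^{qnil}$ still in force, exactly as the paper does.
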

\begin{theorem}\label{alg_prod}
    Let $a\in\mathcal{A}^d$ and $ab=ca\text{, }ba=ac$ for some $b,c\in\mathcal{A}$. Then the following are true:
    \begin{enumerate}[label=(\roman*)]
        \item\label{f1} $aa^db=baa^d$;
        \item \label{f2} $aa^dc=caa^d$;
        \item \label{f3} $a^db=ca^d$;
        \item \label{f4} $a^dc=ba^d$.
    \end{enumerate}
    \begin{proof}
    Since $a\in\mathcal{A}^d$, therefore $a(1-aa^d)\in\mathcal{A}^{qnil},$ therefore $\displaystyle\lim_{n\to\infty}\|\left(a-a^2a^d\right)^n\|^{\frac{1}{n}}=0.$
    \begin{enumerate}[label=(\roman*)]
        \item Now we have \begin{align*}
            aa^db-aa^dbaa^d&=aa^db(1-aa^d)\\&=(a^d)^{2n}a^{2n}b(1-aa^d)\\&=(a^d)^{2n}ba^{2n}(1-aa^d)\\&=(a^d)^{2n}b(a-a^2a^d)^{2n}. \end{align*}
        Hence we obtain $$\|aa^d-aa^dbaa^d\|^{\frac{1}{2n}}\leq \|a^d\|\|b\|^{\frac{1}{2n}}\|(a-a^2a^d)^{{2n}}\|^{\frac{1}{2n}}.$$ Therefore $$\lim_{n\to\infty}\|aa^d-aa^dbaa^d\|^{\frac{1}{2n}}=0,$$ and we get $aa^db-aa^dbaa^d=0.$ Similarly, $baa^d-aa^dbaa^d=0$, thus we have $aa^db=baa^d,$ as required.
        \item Similar like \ref{f1}, follows from the fact that $a^2c=ca^2.$
    \end{enumerate}
     \hspace{0.2cm}   \ref{f3} and \ref{f4} follows similarly as Theorem \ref{idemp}.
    \end{proof}
\end{theorem}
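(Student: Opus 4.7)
The plan is to transport the proof of Theorem~\ref{idemp} from the Drazin to the g-Drazin setting, replacing the nilpotency of $a^k(1-aa^d)$ with the quasinilpotency of $a-a^2a^d$. The only analytic input will be the spectral-radius identity $\lim_{n\to\infty}\|(a-a^2a^d)^n\|^{1/n}=0$, inherited from $a-a^2a^d\in\mathcal{A}^{qnil}$; everything else is pure algebra.

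For \ref{f1}, I would start with $aa^db-aa^db\,aa^d=aa^db(1-aa^d)$ and insert $aa^d=(a^d)^{2n}a^{2n}$, which follows from iterating $a^d=(a^d)^2a$ together with $a^da=aa^d$. Lemma~\ref{2commutative} gives $a^2b=ba^2$, so induction yields $a^{2n}b=ba^{2n}$, and since $a$ commutes with $1-aa^d$ one has $a^{2n}(1-aa^d)=(a-a^2a^d)^{2n}$. Combining these, $aa^db(1-aa^d)=(a^d)^{2n}b(a-a^2a^d)^{2n}$, and taking $(2n)$-th roots of norms gives a bound of the form $\|a^d\|\,\|b\|^{1/(2n)}\,\|(a-a^2a^d)^{2n}\|^{1/(2n)}\to 0$, which forces $aa^db=aa^db\,aa^d$. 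The analogous manipulation, expanding $aa^d$ as $a^{2n}(a^d)^{2n}$ on the left instead of the right, yields $baa^d=aa^db\,aa^d$ and completes \ref{f1}. Statement \ref{f2} follows by the same argument with $c$ replacing $b$, once one checks $a^2c=a(ac)=a(ba)=(ab)a=(ca)a=ca^2$.

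Parts \ref{f3} and \ref{f4} need no further analytic estimate and should follow algebraically, exactly as in Theorem~\ref{idemp}. Multiplying \ref{f1} on the left by $a^d$ (using $a^daa^d=a^d$) gives $a^db=a^db\,aa^d$; then $ba=ac$ and \ref{f2} reduce this to $a^db=a^d(ba)a^d=a^d(ac)a^d=aa^d c\,a^d=caa^d\,a^d=ca^d$. Part \ref{f4} follows from the $b\leftrightarrow c$ symmetry of the hypothesis $ab=ca$, $ba=ac$. The main obstacle is \ref{f1}: everything hinges on arranging the factors so that the norm decay of $(a-a^2a^d)^{2n}$ can be pulled out via the spectral radius, which in turn requires recognizing the identities $(a^d)^{2n}a^{2n}=aa^d$ and $a^{2n}b=ba^{2n}$ before the quasinilpotency of $a-a^2a^d$ can be invoked.
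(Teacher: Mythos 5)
Your proposal is correct and follows essentially the same route as the paper: the identity $aa^db(1-aa^d)=(a^d)^{2n}b(a-a^2a^d)^{2n}$ combined with the vanishing spectral radius of $a-a^2a^d$ for parts \ref{f1} and \ref{f2}, and the purely algebraic reduction of \ref{f3} and \ref{f4} exactly as in Theorem \ref{idemp}. Your observation that \ref{f4} follows from \ref{f3} by the $b\leftrightarrow c$ symmetry of the hypotheses is a slightly cleaner justification than the paper's ``follows similarly.''
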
 
\noindent The previous lemma yields the following result.
\begin{corollary}
   Let $a,b\in\mathcal{A}$ are $a$-weakly commutative. If $a$ is g-Drazin invertible then $aa^db=baa^d.$
\end{corollary}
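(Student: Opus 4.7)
The plan is to derive this directly from Theorem \ref{alg_prod}\ref{f1}, which does all the real work; the corollary just repackages it using the weak commutativity vocabulary.

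First I would unpack the hypothesis. By Definition \ref{defn_weak}, saying that $a,b$ are $a$-weakly commutative means precisely that there exists an element $c\in\mathcal{A}$ satisfying
\[
ab=ca \quad\text{and}\quad ba=ac.
\]
This is exactly the setup of Theorem \ref{alg_prod}, with the same $a,b,c$.

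Next I would invoke Theorem \ref{alg_prod}\ref{f1}, whose conclusion is $aa^{d}b=baa^{d}$ under the assumptions $a\in\mathcal{A}^{d}$, $ab=ca$, $ba=ac$. Since $a$ is g-Drazin invertible by hypothesis and the commutation identities have just been extracted from the weak commutativity assumption, every hypothesis of the theorem is in place, and the conclusion $aa^{d}b=baa^{d}$ is immediate.

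There is no real obstacle here, as the corollary is essentially a translation statement: the content already lives in Theorem \ref{alg_prod}\ref{f1}, whose proof (replacing the nilpotent-index truncation of Theorem \ref{idemp} by the quasinilpotent limit $\lim_{n\to\infty}\|(a-a^{2}a^{d})^{n}\|^{1/n}=0$) carries the analytical weight. The only thing worth remarking is that the auxiliary element $c$ from the definition of $a$-weak commutativity does not appear in the conclusion, so no compatibility between $c$ and $a^{d}$ needs to be verified; a single application of part \ref{f1} of the theorem finishes the argument.
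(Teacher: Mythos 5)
Your proposal is correct and matches the paper exactly: the paper states this corollary as an immediate consequence of Theorem \ref{alg_prod}\ref{f1} (``The previous lemma yields the following result''), and your argument of unpacking Definition \ref{defn_weak} to obtain $c$ with $ab=ca$, $ba=ac$ and then invoking part \ref{f1} is precisely that. Nothing further is needed.
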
 
\noindent In accordance with Theorem \ref{product} using Theorem \ref{alg_prod} and Lemma \ref{bcommu}, we obtain the following result.
\begin{theorem}
    If $a,b\in\mathcal{A}^d$, are $\{a,b\}$-weakly commutative then $ab\in\mathcal{A}^d$ and $(ab)^d=b^da^d.$
\end{theorem}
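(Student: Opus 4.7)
The plan is to mirror the argument of Theorem \ref{product}, with the adjustments needed for the g-Drazin setting: nilpotent objects are replaced by quasinilpotent ones, and Lemma \ref{nil} and Lemma \ref{oldnil} are replaced by their g-Drazin counterparts Lemma \ref{bcommu} and Lemma \ref{basicqnil}. Choose witnesses $c_1, c_2 \in \mathcal{A}$ so that $ab = c_1 a = b c_2$ and $ba = a c_1 = c_2 b$. Set $x := b^d a^d$; the aim is to verify the three defining conditions $abx = xab$, $xabx = x$, and $ab - (ab)^2 x \in \mathcal{A}^{qnil}$.

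Theorem \ref{alg_prod}, applied once to the $a$-weakly commutative pair and once to the $b$-weakly commutative pair, supplies all commutations needed for the algebraic identities: $aa^d b = b aa^d$, $aa^d c_1 = c_1 aa^d$, $bb^d a = a bb^d$, $bb^d c_2 = c_2 bb^d$, together with the shift identities $a^d b = c_1 a^d$, $a^d c_1 = b a^d$, $b^d a = c_2 b^d$, $b^d c_2 = a b^d$. In particular $aa^d$ and $bb^d$ commute with each other. Short manipulations then give $abx = xab = aa^d bb^d$ and $xabx = x$, while $(ab)^2 x = a^2 a^d b^2 b^d$, whence
\begin{equation*}
ab - (ab)^2 x \;=\; (a - a^2 a^d)\,b \;+\; a^2 a^d\,(b - b^2 b^d) \;=\; a_0 b + a^2 a^d b_0,
\end{equation*}
with $a_0 := a - a^2 a^d \in \mathcal{A}^{qnil}$ and $b_0 := b - b^2 b^d \in \mathcal{A}^{qnil}$.

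The heart of the argument is showing both summands are quasinilpotent and that their product vanishes. For $a_0 b$, the plan is to verify $a_0 b = c_1 a_0$ and $b a_0 = a_0 c_1$ (using $ab = c_1 a$, $ba = ac_1$, and $aa^d c_1 = c_1 aa^d$), so that $a_0$ and $b$ are $a_0$-weakly commutative; Lemma \ref{bcommu} then forces $a_0 b \in \mathcal{A}^{qnil}$. For $a^2 a^d b_0$, an analogous calculation will give $b_0 \cdot a^2 a^d = (c_2 aa^d)\,b_0$ and $a^2 a^d \cdot b_0 = b_0\,(c_2 aa^d)$, exhibiting $b_0$ and $a^2 a^d$ as $b_0$-weakly commutative with common witness $c_2 aa^d$. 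Lemma \ref{bcommu} then yields $b_0 \cdot a^2 a^d \in \mathcal{A}^{qnil}$, and Jacobson's identity $\sigma(uv)\setminus\{0\} = \sigma(vu)\setminus\{0\}$ transfers quasinilpotency to $a^2 a^d b_0$. For the vanishing product, the identity $a_0 \cdot a^2 a^d = a(1 - aa^d)\,aa^d\,a = 0$, together with $b\,a^2 a^d = a^2 a^d c_1$ (from $ba = ac_1$ and $aa^d c_1 = c_1 aa^d$), gives $(a_0 b)(a^2 a^d b_0) = a_0 \cdot a^2 a^d c_1 \cdot b_0 = 0$. Lemma \ref{basicqnil} then delivers $a_0 b + a^2 a^d b_0 \in \mathcal{A}^{qnil}$, completing the verification that $x = b^d a^d$ is $(ab)^d$.

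The main obstacle will be the second weakly commutative verification: producing a single element $c_2 aa^d$ that serves as the common witness for the pair $(b_0, a^2 a^d)$. This requires juggling all four commutation identities simultaneously, together with $ab = bc_2$ and $ba = c_2 b$, and checking that the two independently computed expressions $b_0 a^2 a^d = c_2 aa^d \cdot b_0$ and $a^2 a^d b_0 = b_0 \cdot c_2 aa^d$ indeed agree on the same $c$; this works essentially because the spectral idempotent $aa^d$ splits off cleanly and commutes with $b$, $b^d$, and itself inside $bb^d$. Once this is established, the remainder of the argument is a faithful g-Drazin translation of the proof of Theorem \ref{product}.
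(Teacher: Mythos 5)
Your proposal is correct and is exactly the route the paper intends: the paper gives no written proof beyond the remark that the result follows from Theorem \ref{product} via Theorem \ref{alg_prod} and Lemma \ref{bcommu}, and your plan is precisely that translation, with the key verifications (in particular the common witness $c_2aa^d$ for the pair $(b_0,a^2a^d)$, which works because $aa^d$ commutes with $b$, $b^d$ and hence with $b_0$, and the vanishing product $(a_0b)(a^2a^db_0)=0$ feeding into Lemma \ref{basicqnil}) all checking out.
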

\begin{theorem}\label{inv_nil}
    If $a\in\mathcal{A}^{inv}$ and $b\in\mathcal{A}^{qnil}$ such as $a,b$ are $\{a,b\}$-weakly commutative, then $a+b\in\mathcal{A}^{inv}.$
    \begin{proof}
        Since $a,b$ are $\{a,b\}$-weakly commutative, then by Lemma \ref{2commutative}, we have $a^2b=ba^2$ and by Lemma \ref{bcommu}, $ab\in\mathcal{A}^{qnil}.$ Moreover $$a+b=a(1+a^{-1}b)=a(1+a^{-2}ab).$$ Now since $a\in\mathcal{A}^{inv}$, then $a^2b=ba^2$ implies $a^{-2}(ab)=(ab)a^{-2}$ but $ab\in\mathcal{A}^{qnil}$ therefore $a^{-2}ab\in\mathcal{A}^{qnil}$, so $1+a^{-2}ab\in\mathcal{A}^{inv}.$ Hence $a+b\in\mathcal{A}^{inv}$ and $$(a+b)^{-1}=\left(a(1+a^{-2}ab)\right)^{-1}=(1+a^{-2}ab)^{-1}a^{-1}.$$
    \end{proof}
\end{theorem}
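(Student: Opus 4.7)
The plan is to adapt the Banach space proof of Lemma \ref{new_inv} to the Banach algebra setting, replacing the terminating Neumann series by a spectral-radius argument. First I would unpack the $\{a,b\}$-weak commutativity into two usable consequences. The $a$-weak commutativity gives, via Lemma \ref{2commutative}, the identity $a^2 b = b a^2$. The $b$-weak commutativity, together with $b \in \mathcal{A}^{qnil}$, falls under the first assertion of Lemma \ref{bcommu} after swapping the roles of $a$ and $b$, and so yields $ba \in \mathcal{A}^{qnil}$; since $r(ab) = r(ba)$ in any Banach algebra, we also obtain $ab \in \mathcal{A}^{qnil}$.

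Next I would use the invertibility of $a$ to factor
\[
a + b \;=\; a\bigl(1 + a^{-1}b\bigr) \;=\; a\bigl(1 + a^{-2}(ab)\bigr).
\]
The identity $a^{2}b = ba^{2}$, combined with invertibility of $a$, gives $a^{-2}(ab) = (ab)a^{-2}$, so $a^{-2}$ and $ab$ commute. For commuting elements in a Banach algebra the spectral radius satisfies $r(xy) \le r(x)\, r(y)$, and since $r(ab)=0$ this forces $a^{-2}(ab) \in \mathcal{A}^{qnil}$.

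Finally, a quasinilpotent element has spectrum $\{0\}$, so $1 + a^{-2}(ab)$ has spectrum $\{1\}$ and is invertible. Consequently $a+b$ is a product of two invertible elements and hence $a+b \in \mathcal{A}^{inv}$, with explicit inverse $\bigl(1+a^{-2}(ab)\bigr)^{-1} a^{-1}$.

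The only delicate point is the passage from \emph{nilpotent} in Lemma \ref{new_inv} to \emph{quasinilpotent} here: the finite Neumann expansion used in the Banach space argument is no longer available, and the invertibility of $1 + a^{-2}(ab)$ has to be justified through the spectral radius rather than by an explicit polynomial formula. Everything else is a routine transcription of the Banach space proof, with Lemma \ref{bcommu} standing in for Lemma \ref{nil}.
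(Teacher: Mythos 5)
Your argument is essentially identical to the paper's proof: the same factorization $a+b=a\bigl(1+a^{-2}(ab)\bigr)$, the same use of Lemma \ref{2commutative} to get $a^{-2}(ab)=(ab)a^{-2}$, and the same conclusion via quasinilpotence of $a^{-2}(ab)$. You are in fact slightly more careful than the paper at two points it glosses over — deducing $ab\in\mathcal{A}^{qnil}$ from Lemma \ref{bcommu} by first getting $ba\in\mathcal{A}^{qnil}$ and using $r(ab)=r(ba)$, and invoking $r(xy)\le r(x)r(y)$ for commuting elements — but these are refinements of the same route, not a different one.
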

\begin{theorem}\label{inv_dra}
    Let $a\in\mathcal{A}^d$ and $b\in\mathcal{A}^{qnil}$, where $a,b$ are $\{a,b\}$-weakly commutative, then $a+b\in\mathcal{A}^d.$
    \begin{proof}
        Since $a,b$ are $\{a,b\}$-weakly commutative, then we have $ab=c_1a,ba=ac_1$ and $ab=bc_2,ba=c_2b$ for some $c_1,c_2\in\mathcal{A}$. Let $p=aa^d$ and $\mathcal{A}_1=p\mathcal{A}p,\mathcal{A}_2=(1-p)\mathcal{A}(1-p)$. Then corresponding to the idempotent $p$, $a$ has the matrix representation
        \begin{equation}\label{eqn1}
            a=\begin{bmatrix}
            a_1 & 0 \\
            0 & a_2 
        \end{bmatrix}_p ,\text{ where }a_1\in\mathcal{A}_1^{inv}\text{ and }a_2\in\mathcal{A}_2^{qnil}.
        \end{equation} Furthermore by Theorem \ref{alg_prod}, $b,c_1$ also have a  representation corresponding to (\ref{eqn1}), given by $$c_1=\begin{bmatrix}
            c_{11} & 0\\
            0 & c_{12}
        \end{bmatrix}_p \text{ and } b=\begin{bmatrix}
            b_1 & 0\\
            0 & b_2
        \end{bmatrix}_p,$$ where $b_1\in\mathcal{A}_1^{qnil},\text{ }b_2\in\mathcal{A}_2^{qnil}.$ Then $ab=c_1a,ba=ac_1$ gives $$a_1b_1=c_{11}a_1,\text{ }b_1a_1=a_1c_{11}\text{ and }a_2b_2=c_{12}a_2,\text{ }b_2a_2=c_{12}a_2.$$
       Moreover if $c_2=\begin{bmatrix}
           c_{21} & c_{22}\\
           c_{23} & c_{24}
           \end{bmatrix}_p$ then $ab=bc_2,ba=c_2b$ gives $$a_1b_1=b_1c_{21},\text{ }b_1a_1=c_{21}b_1\text{ and }a_2b_2=b_2c_{24},\text{ }b_2a_2=c_{24}b_2.$$ Therefore by Theorem \ref{inv_nil}, we get $a_1+b_1\in\mathcal{A}_1^{inv}$ and by Lemma \ref{bcommu}, $a_2+b_2\in\mathcal{A}_2^{qnil}.$ Hence $a+b\in\mathcal{A}^d.$
    \end{proof}
\end{theorem}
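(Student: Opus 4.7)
The plan is to mirror the strategy of Theorem \ref{Dra_Nil}, but carried out inside the Peirce decomposition of $\mathcal{A}$ induced by the idempotent $p = aa^d$. Set $\mathcal{A}_1 = p\mathcal{A}p$ and $\mathcal{A}_2 = (1-p)\mathcal{A}(1-p)$, and write the matrix representation of $a$ relative to $p$. Because $a$ commutes with $aa^d$, the matrix of $a$ is block diagonal; the $(1,1)$-entry $a_1$ is invertible in $\mathcal{A}_1$ (its inverse is the compression of $a^d$), while the $(2,2)$-entry is $a_2 = a - a^2 a^d \in \mathcal{A}_2^{qnil}$ by the defining property of the g-Drazin inverse.

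Next I would apply Theorem \ref{alg_prod} to the $a$-weakly commutative pair $(a,b)$ with witness $c_1$: parts \ref{f1} and \ref{f2} give $aa^d b = baa^d$ and $aa^d c_1 = c_1 aa^d$, so both $b$ and $c_1$ are also block diagonal relative to $p$, say $b = b_1 \oplus b_2$ and $c_1 = c_{11} \oplus c_{12}$. The second witness $c_2$ (for $b$-weak commutativity) need not commute with $p$, so I would keep the general form $c_2 = \begin{bmatrix} c_{21} & c_{22} \\ c_{23} & c_{24} \end{bmatrix}_p$. Substituting these block expressions into $ab = c_1 a$, $ba = ac_1$, $ab = bc_2$, $ba = c_2 b$ and comparing the diagonal entries, one obtains that $(a_1,b_1)$ is $\{a_1,b_1\}$-weakly commutative in $\mathcal{A}_1$ (with witnesses $c_{11}, c_{21}$) and $(a_2,b_2)$ is $\{a_2,b_2\}$-weakly commutative in $\mathcal{A}_2$ (with witnesses $c_{12}, c_{24}$); the off-diagonal entries of $c_2$ drop out harmlessly because $a$ and $b$ are already block diagonal.

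Finally, since $b \in \mathcal{A}^{qnil}$ splits as $b_1 \oplus b_2$ across the commuting idempotents $p, 1-p$, standard spectral considerations give $b_1 \in \mathcal{A}_1^{qnil}$ and $b_2 \in \mathcal{A}_2^{qnil}$. Theorem \ref{inv_nil} then yields $a_1 + b_1 \in \mathcal{A}_1^{inv}$, and Lemma \ref{bcommu} (applied to the $\{a_2,b_2\}$-weakly commutative pair in $\mathcal{A}_2$, with $a_2$ quasinilpotent) yields $a_2 + b_2 \in \mathcal{A}_2^{qnil}$. Assembled, $a+b$ has block-diagonal form with an invertible first block and a quasinilpotent second block relative to the idempotent $p$, which is precisely the criterion for $a+b \in \mathcal{A}^d$.

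The step I expect to be the main obstacle is the bookkeeping for the second witness $c_2$: because $c_2$ is not forced to be block diagonal, I must verify that the equations $ab = bc_2$ and $ba = c_2 b$ still produce legitimate $b$-weak commutativity witnesses $c_{21}, c_{24}$ in the corner algebras, and that the off-diagonal equations do not impose extra constraints that could fail. Once this matrix comparison is pinned down, the rest is a direct application of the preceding results \ref{alg_prod}, \ref{inv_nil}, and \ref{bcommu}.
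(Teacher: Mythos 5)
Your proposal is correct and follows essentially the same route as the paper: the Peirce decomposition relative to $p=aa^d$, Theorem \ref{alg_prod} to make $b$ and $c_1$ block diagonal while leaving $c_2$ in general form, extraction of the corner witnesses $c_{11},c_{12},c_{21},c_{24}$ from the diagonal entries, and then Theorem \ref{inv_nil} for the invertible block and Lemma \ref{bcommu} for the quasinilpotent block. The point you flag as a potential obstacle --- that the off-diagonal entries of $c_2$ impose no harmful constraints because $a$ and $b$ are already block diagonal --- is handled in exactly the same way (implicitly) in the paper's proof.
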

Using Theorem \ref{inv_nil} and Theorem \ref{inv_dra}, we obtain the following results similar to Theorem \ref{Drazin_main}.
\begin{theorem}
    Let $a,b\in\mathcal{A}^d$ such that $a,b$ are $\{a,b\}$-weakly commutative, then the following are equivalent:
    \begin{enumerate}[label=(\roman*)]
        \item $a+b\in \mathcal{A}^d;$ \label{con_11}
        \item $aa^d(a+b)bb^d\in \mathcal{A}^d;$\label{con_21}
        \item $aa^d(a+b)\in \mathcal{A}^d;$\label{con_31}
        \item $(a+b)bb^d \in \mathcal{A}^d.$\label{con_41}
    \end{enumerate}
\end{theorem}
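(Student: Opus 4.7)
The plan is to adapt the Banach-space argument of Theorem \ref{Drazin_main} to the Banach-algebra setting, replacing the topological direct-sum decomposition by the $2\times 2$ matrix decomposition over the idempotent $p=aa^d$ used in the proof of Theorem \ref{inv_dra}.

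First, Theorem \ref{alg_prod} shows that $aa^d$ commutes with $b$ (and hence with $b^d$), so both $a$ and $b$ are block diagonal with respect to $p$, giving representations $a=a_1\oplus a_2$ and $b=b_1\oplus b_2$ with $a_1\in\mathcal{A}_1^{inv}$, $a_2\in\mathcal{A}_2^{qnil}$ and $b_i\in\mathcal{A}_i^d$ for $i=1,2$. A short computation, identical to the one appearing at the end of the proof of Theorem \ref{inv_dra}, shows that the witnesses $c_1,c_2$ of $\{a,b\}$-weak commutativity also descend to the corners $\mathcal{A}_1$ and $\mathcal{A}_2$, so that $a_i,b_i$ are genuinely $\{a_i,b_i\}$-weakly commutative. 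Since $a_2\in\mathcal{A}_2^{qnil}$ and $b_2\in\mathcal{A}_2^d$, Theorem \ref{inv_dra} (applied with the roles of $a$ and $b$ interchanged) gives $a_2+b_2\in\mathcal{A}_2^d$ unconditionally. Because $a+b=(a_1+b_1)\oplus(a_2+b_2)$, we therefore have $a+b\in\mathcal{A}^d$ if and only if $a_1+b_1\in\mathcal{A}_1^d$. As $aa^d(a+b)=(a_1+b_1)\oplus 0$, this yields $\textrm{(i)}\Leftrightarrow\textrm{(iii)}$ at once; the symmetric argument with $q=bb^d$ in place of $p=aa^d$, legitimate because the hypothesis is invariant under swapping $a$ and $b$, yields $\textrm{(i)}\Leftrightarrow\textrm{(iv)}$.

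For $\textrm{(i)}\Leftrightarrow\textrm{(ii)}$, I would nest the decomposition: inside $\mathcal{A}_1$, Theorem \ref{alg_prod} applied to the g-Drazin invertible $b_1$ shows $b_1b_1^d$ commutes with $a_1$, yielding a further splitting $a_1=a_{11}\oplus a_{12}$ and $b_1=b_{11}\oplus b_{12}$ with $b_{11}$ invertible, $b_{12}$ quasinilpotent, and both $a_{11},a_{12}$ invertible (as corners of the invertible $a_1$). Theorem \ref{inv_nil} makes $a_{12}+b_{12}$ invertible, so $a_1+b_1\in\mathcal{A}_1^d$ if and only if $a_{11}+b_{11}$ is g-Drazin invertible in its corner. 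A direct computation, using that $p$ and $bb^d$ commute with $a$ and $b$, shows $aa^d(a+b)bb^d$ reduces to $(a_{11}+b_{11})\oplus 0\oplus 0\oplus 0$ in the resulting four-corner refinement, and is therefore g-Drazin invertible precisely when $a_{11}+b_{11}$ is. This closes the chain of equivalences.

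The main obstacle is purely the bookkeeping of witnesses: at every corner passage one must verify that both defining identities $ab=c_1 a,\,ba=ac_1$ and $ab=bc_2,\,ba=c_2 b$ restrict simultaneously, so that the full $\{\cdot,\cdot\}$-weak commutativity (and not merely one of the two one-sided versions) is genuinely inherited in each subalgebra. Once the initial block-diagonal form of $c_1$ (and of the diagonal part of $c_2$) is secured via Theorem \ref{alg_prod}, this is routine, and the whole argument is carried out entirely within the Banach-algebra framework with no appeal to an ambient Banach space.
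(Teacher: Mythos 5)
Your proposal is correct and follows essentially the same route the paper intends: the paper omits the proof, noting only that the result follows from Theorems \ref{inv_nil} and \ref{inv_dra} by mirroring Theorem \ref{Drazin_main}, and your argument is exactly that adaptation, replacing the space decomposition $X=\mathcal{R}(AA^D)\oplus\mathcal{N}(AA^D)$ by the Peirce decomposition relative to $p=aa^d$ (and the nested one relative to $bb^d$), with the corner-inheritance of both witnesses $c_1,c_2$ handled as in the proof of Theorem \ref{inv_dra}. No gaps.
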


\section*{Declarations}

The authors have no conflicts of interest to declare.

\makeatletter
\renewcommand\@makefnmark%
{\mbox{\textsuperscript{\normalfont\@thefnmark)}}}
\makeatother



\end{document}